\def\bZ{\mathbb{Z}}
\def\bQ{\mathbb{Q}}
\def\bR{\mathbb{R}}
\def\bC{\mathbb{C}}
\def\bP{\mathbb{P}}
\newtheorem{thm}{Theorem}[section]
\newtheorem{lem}{Lemma}[section]
\newtheorem{prop}{Proposition}[section]
\newtheorem{defn}{Definition}[section]
\numberwithin{equation}{section}
\begin{document}

\title[Open and closed GW invariants]{A formula equating open and closed Gromov-Witten invariants and its applications to mirror symmetry}
\author[K. Chan]{Kwokwai Chan}
\address{Department of Mathematics\\ The Chinese University of Hong Kong\\ Shatin \\ Hong Kong}
\email{kwchan@math.cuhk.edu.hk}

\subjclass[2010]{Primary 53D45, 53D37; Secondary 14N35, 14M25, 53D12}
\keywords{Open Gromov-Witten invariants, semi-Fano, toric manifolds, mirror symmetry, Landau-Ginzburg model, superpotential}

\begin{abstract}
We prove that open Gromov-Witten invariants for semi-Fano toric manifolds of the form $X=\bP(K_Y\oplus\mathcal{O}_Y)$, where $Y$ is a toric Fano manifold, are equal to certain 1-pointed closed Gromov-Witten invariants of $X$. As applications, we compute the mirror superpotentials for these manifolds. In particular, this gives a simple proof for the formula of the mirror superpotential for the Hirzebruch surface $\mathbb{F}_2$.
\end{abstract}

\maketitle

\tableofcontents

\section{Introduction}

Let $X$ be a compact complex $n$-dimensional toric manifold equipped with a toric K\"ahler structure $\omega$. Let $L$ be a Lagrangian torus fiber of the moment map associated to $(X,\omega)$. In \cite{FOOO08a}, Fukaya, Oh, Ohta and Ono define open Gromov-Witten invariants for $(X,L)$ as follows. Let $\beta\in\pi_2(X,L)$ be a relative homotopy class with Maslov index $\mu(\beta)=2$. Let $\mathcal{M}_1(L,\beta)$ be the moduli space of holomorphic disks in $X$ with boundaries lying in $L$ and with one boundary marked point representing the class $\beta$. A compactification of $\mathcal{M}_1(L,\beta)$ is given by the moduli space $\overline{\mathcal{M}}_1(L,\beta)$ of stable maps from genus 0 bordered Riemann surfaces $(\Sigma,\partial\Sigma)$ to $(X,L)$ with one boundary marked point representing the class $\beta$. As shown by Fukaya et al. in their monumental work \cite{FOOO06}, $\overline{\mathcal{M}}_1(L,\beta)$ is a Kuranishi space with real virtual dimension $n$. By Corollary 11.5 in \cite{FOOO08a}, there exists a virtual fundamental $n$-cycle $[\overline{\mathcal{M}}_1(L,\beta)]^\textrm{vir}$. The pushforward of this cycle by the evaluation map $ev:\overline{\mathcal{M}}_1(L,\beta)\to L$ at the boundary marked point then gives
$$c_\beta=ev_*([\overline{\mathcal{M}}_1(L,\beta)]^\textrm{vir})\in H_n(L,\bQ)\cong\bQ.$$
By Lemma 11.7 in \cite{FOOO08a}, the homology class $c_\beta$ is independent of the pertubation data (transversal multisections) used to define $[\overline{\mathcal{M}}_1(L,\beta)]^\textrm{vir}$. Hence, $c_\beta$ is an open Gromov-Witten invariant for $(X,L)$.

Let $Y$ be an $(n-1)$-dimensional toric Fano manifold. Consider the $\bP^1$-bundle $X=\bP(K_Y\oplus\mathcal{O}_Y)$ over $Y$, where $K_Y$ denotes the anti-canonical bundle of $Y$. Then $X$ is an $n$-dimensional toric manifold which is semi-Fano, i.e. the anti-canonical bundle $K_X^{-1}$ is nef. Let $h\in H_2(X,\bZ)$ be the fiber class. Let $\alpha\in H_2(X,\bZ)$ be an effective class with $c_1(\alpha)=c_1(X)\cdot\alpha=0$. Consider the moduli space $\overline{\mathcal{M}}_{0,1}(X,h+\alpha)$ of genus 0 stable maps to $X$ with one marked point representing the class $h+\alpha$.\footnote{Here the subscripts $0,1$ denote the genus and number of marked points respectively.} By \cite{FO99}, $\overline{\mathcal{M}}_{0,1}(X,h+\alpha)$ is a Kuranishi space with complex virtual dimension $n$. The pushforward of the virtual fundamental cycle $[\overline{\mathcal{M}}_{0,1}(X,h+\alpha)]^\textrm{vir}$ by the evaluation map $ev:\overline{\mathcal{M}}_{0,1}(X,h+\alpha)\to X$ gives a 1-pointed closed Gromov-Witten invariant of $X$:
$$\textrm{GW}^{X,h+\alpha}_{0,1}(\textrm{[pt]})=ev_*([\overline{\mathcal{M}}_{0,1}(X,h+\alpha)]^\textrm{vir})\in H_{2n}(X,\bQ)\cong\bQ,$$
where $\textrm{[pt]}$ denotes the Poincar\'e dual of a point in $X$.

Now let $\iota_0:Y\hookrightarrow X$ be the closed embedding of $Y$ as the zero section of $K_Y$. The image is a toric prime divisor $D_0=\iota_0(Y)\subset X$. As above, we equip $X$ with a toric K\"ahler structure $\omega$ and fix a Lagrangian torus fiber $L$ in $X$. Corresponding to $D_0$ is a relative homotopy class $\beta_0\in\pi_2(X,L)$. More precisely, $\beta_0\in\pi_2(X,L)$ is the class such that $D_i\cdot\beta_0=\delta_{i0}$ for any toric prime divisor $D_i$ in $X$. The main result of this paper is the following formula.
\begin{thm}\label{main_thm}
For the $\bP^1$-bundle $X=\bP(K_Y\oplus\mathcal{O}_Y)$ over a toric Fano manifold $Y$, we have the equality
$$c_{\beta_0+\alpha}=\textrm{GW}^{X,h+\alpha}_{0,1}(\textrm{[pt]})$$
for any effective class $\alpha\in H_2(X,\bZ)$ with $c_1(\alpha)=0$.
\end{thm}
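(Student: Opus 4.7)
The plan is to establish a ``doubling'' correspondence between the stable disks contributing to $c_{\beta_0+\alpha}$ and the stable genus-$0$ maps contributing to $\textrm{GW}^{X,h+\alpha}_{0,1}(\textrm{[pt]})$. The key geometric observation is that the basic disk representing $\beta_0$ lies inside a $\bP^1$-fiber of $\pi:X\to Y$, with its center on $D_0$ and its boundary an equator of the Lagrangian torus $L$; closing this disk up via the natural real structure on the fiber (whose fixed locus is that equator) produces the full $\bP^1$-fiber, which represents the class $h=\beta_0+\beta_\infty$.

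First I would analyze the open moduli space $\overline{\mathcal{M}}_1(L,\beta_0+\alpha)$ using the classification of Maslov-index-$2$ stable disks in semi-Fano toric manifolds (due to Cho--Oh and Fukaya--Oh--Ohta--Ono). By adjunction, $c_1(T_X)|_{D_\infty}=c_1(T_Y)+c_1(K_Y^{-1})=0$, so the effective classes $\alpha$ with $c_1(\alpha)=0$ are precisely those represented by rational curves lying on the ``infinity section'' $D_\infty=\bP(K_Y\oplus 0)$. A contributing stable disk thus decomposes into a disk component in class $\beta_0$, sitting in some fiber $\pi^{-1}(q)$, together with sphere bubbles carrying the total class $\alpha$.

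Next I would carry out the parallel analysis for $\overline{\mathcal{M}}_{0,1}(X,h+\alpha)$ subject to the point constraint. Through a generic point $p\in X$, any stable map in class $h+\alpha$ must contain the unique fiber $\pi^{-1}(\pi(p))$ as its $h$-component, with the bubble components in class $\alpha$ attached at the intersection with $D_\infty$. Choosing $p\in L$, the doubling construction then yields a natural identification: doubling the $\beta_0$-disk of a stable disk with boundary marked point $p$ produces a stable sphere in $h+\alpha$ through $p$, and conversely restricting such a sphere to the hemisphere of its fiber meeting $L$ recovers the original disk.

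Finally I would match virtual fundamental cycles. The linearized Cauchy--Riemann operator for disks with boundary on $L$ doubles, along the fiber's real structure, to the Cauchy--Riemann operator for the doubled sphere, which matches the obstruction bundles. Pushing forward the open virtual cycle by the boundary evaluation yields $c_{\beta_0+\alpha}\cdot[L]\in H_n(L,\bQ)\cong\bQ$; pushing forward the closed virtual cycle and pairing with $\textrm{[pt]}$ yields $\textrm{GW}^{X,h+\alpha}_{0,1}(\textrm{[pt]})$; and the doubling identification equates these two rational numbers. The main technical obstacle is exactly this last step, where the Kuranishi structures and obstruction bundles need to be compared under doubling, in the presence of the $\alpha$-bubbles. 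A related subtlety is that the $\alpha$-bubbles lie in $D_\infty$ while the basic disk in $\beta_0$ hits only $D_0$, so the two do not meet geometrically (since $D_0\cap D_\infty=\emptyset$); their combination in a stable disk must therefore be interpreted via a careful virtual perturbation argument, or via the existence of smooth disks in the full class $\beta_0+\alpha$ arising from the non-Fano geometry of $X$.
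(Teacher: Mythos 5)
Your overall strategy --- decompose the stable disk into a basic $\beta_0$-disk plus sphere bubbles in class $\alpha$, cap the disk off with the complementary hemisphere of the $\bP^1$-fiber through $p$ to get a stable sphere in class $h+\alpha$, and then match Kuranishi structures using the unobstructedness of the $\beta_0$-disk and of the fiber class --- is essentially the paper's proof (the paper phrases the capping as gluing in the unique $\beta_{m+1}$-disk with reversed boundary rather than as doubling along a real structure, and reduces to the point-constrained moduli spaces via the fiber-product lemma for Kuranishi spaces, but these are the same ideas).

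However, there is a genuine geometric error in your decomposition step, and it is not a minor labelling issue because you build a (spurious) obstacle on top of it. You assert that the effective classes $\alpha$ with $c_1(\alpha)=0$ are represented by curves on the section $D_\infty=\bP(K_Y\oplus 0)$, whose normal bundle is $K_Y^{-1}$; adjunction there gives $c_1(T_X)|_{D_\infty}=c_1(T_Y)+c_1(K_Y^{-1})=2c_1(T_Y)$, which is positive, not zero. The $c_1=0$ curves in fact lie on the \emph{other} section $D_0=\iota_0(Y)$, whose normal bundle is $K_Y$ (this is Lemma \ref{lem3.2}(2), and it follows from the primitive relations: $D_0\cdot\alpha<0$ while $D_{m+1}\cdot\alpha=0$). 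Since $D_0$ is exactly the divisor met once by the $\beta_0$-disk ($D_i\cdot\beta_0=\delta_{i0}$), the $\alpha$-bubbles attach to the disk at its interior intersection point with $D_0$; the configuration is connected, as a stable map must be. Your closing ``subtlety'' --- that the bubbles in $D_\infty$ and the disk meeting only $D_0$ cannot touch, so one must invoke a virtual perturbation or smooth disks in class $\beta_0+\alpha$ --- is an artifact of this misplacement. Indeed, if the bubbles genuinely could not attach to the disk component, the moduli space $\overline{\mathcal{M}}_1(L,\beta_0+\alpha)$ would be empty and $c_{\beta_0+\alpha}$ would vanish, because by Cho--Oh's classification (Theorem \ref{Cho-Oh}(1)) there are \emph{no} smooth Maslov index two disks in class $\beta_0+\alpha$ for $\alpha\neq0$; the escape routes you propose do not exist. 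Once the bubbles are placed on $D_0$, the decomposition propositions (the analogues of Propositions \ref{prop4.2} and \ref{prop4.4}) go through and the rest of your argument is sound; you would still need to spell out the comparison of obstruction bundles, which works because one may take the obstruction space to vanish on the $\beta_0$-disk component and on the fiber component, so both obstruction bundles are supported entirely on the (identical) $\alpha$-bubble trees.
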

Note that $\beta_0+\alpha\in\pi_2(X,L)$ is a Maslov index two class since $c_1(\alpha)=0$. We will prove this formula in Section \ref{sec4} by comparing the Kuranishi structures of $\overline{\mathcal{M}}_1(L,\beta_0+\alpha)$ and $\overline{\mathcal{M}}_{0,1}(X,h+\alpha)$.

We can apply this formula to study mirror symmetry. Recall that the mirror of a compact toric $n$-fold $X$ is given by a Landau-Ginzburg model $(X^\vee,W)$ consisting of a bounded domain $X^\vee\subset(\bC^*)^n$ and a holomorphic function $W:X^\vee\to\bC$ called the mirror superpotential. In \cite{FOOO08a} (see also Cho-Oh \cite{CO03}, Auroux \cite{A07,A09}, Chan-Leung \cite{CL08a,CL08b}), Fukaya, Oh, Ohta and Ono show that the mirror superpotential can be expressed as a power series whose coefficients are the open Gromov-Witten invariants defined above. However, when $X$ is non-Fano, these invariants are in general very hard to compute. The only known examples are the mirror superpotentials for the Hirzebruch surfaces $\mathbb{F}_2$ and $\mathbb{F}_3$, first computed by Auroux in \cite{A09} using degeneration methods and wall-crossing formulas. More recently, Fukaya, Oh, Ohta and Ono \cite{FOOO10a} give a different proof for the $\mathbb{F}_2$ case.

As an immediate application of our formula, we can express the mirror superpotential of $X=\bP(K_Y\oplus\mathcal{O}_Y)$ in terms of 1-point closed Gromov-Witten invariants (see Theorem \ref{thm5.1}). In particular, since $\mathbb{F}_2=\bP(K_{\bP^1}\oplus\mathcal{O}_{\bP^1})$ and its Gromov-Witten invariants are easy to compute as it is symplectomorphic to $\bP^1\times\bP^1$, this gives a very simple proof of the formula for the mirror superpotential of $\mathbb{F}_2$. See the example in Section \ref{sec5}. Our formula has since then been applied to study mirror symmetry for various classes of toric manifolds. See Lau-Leung-Wu \cite{LLW10a,LLW10b}, Chan-Lau-Leung \cite{CLL10} and Chan-Lau \cite{CL10} for more details.

The rest of this paper is organized as follows. In Section \ref{sec2}, we briefly review Kuranishi spaces and recall the results that we need in this paper. In Section \ref{sec3}, we establish several preliminary results concerning the toric manifolds $X=\bP(K_Y\oplus\mathcal{O}_Y)$. In Section \ref{sec4} we prove our formula by a direct comparison of Kuranishi structures. In Section \ref{sec5}, we discuss applications of our formula to mirror symmetry.

\section*{Acknowledgment}
The ideas in this paper were motivated by the joint work \cite{CLL10} with Siu-Cheong Lau and Prof. Conan Leung. I would like to thank both of them for numerous very useful and inspiring discussions. Thanks are also due to Baosen Wu for generously sharing many of his ideas and insights. I am also grateful to Prof. Shing-Tung Yau and Mr. Sze-Lim Li for their continuous encouragement and support. I also thank IH\'{E}S for hospitality and providing an excellent research environment during my stay in the fall of 2010 when the revision of this paper was done. This research was partially supported by Harvard University, a Croucher Foundation Fellowship and a William Hodge Fellowship.

\section{Kuranishi structures}\label{sec2}

In this section, we briefly review the theory of Kuranishi spaces and recall some of their properties for later use. We follow Appendix A1 in the book \cite{FOOO06}. See also Section 3 in Fukaya-Ono \cite{FO99}.

Let $\mathcal{M}$ be a compact metrizable space.
\begin{defn}[Definitions A1.1, A1.3, A1.5 in \cite{FOOO06}]
A Kuranishi structure on $\mathcal{M}$ of (real) virtual dimension $d$ consists of the following data:
\begin{enumerate}
\item[(1)] For each point $\sigma\in\mathcal{M}$,
           \begin{enumerate}
           \item[(1.1)] A smooth manifold $V_\sigma$ (with boundary or corners) and a finite group $\Gamma_\sigma$ acting smoothly and effectively on $V_\sigma$.
           \item[(1.2)] A real vector space $E_\sigma$ on which $\Gamma_\sigma$ has a linear representation and such that $\textrm{dim }V_\sigma-\textrm{dim }E_\sigma=d$.
           \item[(1.3)] A $\Gamma_\sigma$-equivariant smooth map $s_\sigma:V_\sigma\to E_\sigma$.
           \item[(1.4)] A homeomorphism $\psi_\sigma$ from $s_\sigma^{-1}(0)/\Gamma_\sigma$ onto a neighborhood of $\sigma$ in $\mathcal{M}$.
           \end{enumerate}
\item[(2)] For each $\sigma\in\mathcal{M}$ and for each $\tau\in\textrm{Im }\psi_\sigma$,
           \begin{enumerate}
           \item[(2.1)] A $\Gamma_\tau$-invariant open subset $V_{\sigma\tau}\subset V_\tau$ containing $\psi_\tau^{-1}(\tau)$.\footnote{Here and in C2 below, we regard $\psi_\tau$ as a map from $s_\tau^{-1}(0)$ to $\mathcal{M}$ by composing with the quotient map $V_\tau\to V_\tau/\Gamma_\tau$.}
           \item[(2.2)] A homomorphism $h_{\sigma\tau}:\Gamma_\tau\to\Gamma_\sigma$.
           \item[(2.3)] An $h_{\sigma\tau}$-equivariant embedding $\varphi_{\sigma\tau}:V_{\sigma\tau}\to V_\sigma$ and an injective $h_{\sigma\tau}$-equivariant bundle map $\hat\varphi_{\sigma\tau}:E_\tau\times V_{\sigma\tau}\to E_\sigma\times V_\sigma$ covering $\varphi_{\sigma\tau}$.
           \end{enumerate}
\end{enumerate}
Moreover, these data should satisfy the following conditions:
\begin{enumerate}
\item[(i)] $\hat\varphi_{\sigma\tau}\circ s_\tau=s_\sigma\circ\varphi_{\sigma\tau}$.\footnote{Here and after, we also regard $s_\sigma$ as a section $s_\sigma:V_\sigma\to E_\sigma\times V_\sigma$.}
\item[(ii)] $\psi_\tau=\psi_\sigma\circ\varphi_{\sigma\tau}$.
\item[(iii)] If $\xi\in\psi_\tau(s_\tau^{-1}(0)\cap V_{\sigma\tau}/\Gamma_\tau)$, then in a sufficiently small neighborhood of $\xi$,
    $$\varphi_{\sigma\tau}\circ\varphi_{\tau\xi}=\varphi_{\sigma\xi},\ \hat\varphi_{\sigma\tau}\circ\hat\varphi_{\tau\xi}=\hat\varphi_{\sigma\xi}.$$
\end{enumerate}
\end{defn}
The spaces $E_\sigma$ are called obstruction spaces (or obstruction bundles), the maps $\{s_\sigma:V_\sigma\to E_\sigma\}$ are called Kuranishi maps, and $(V_\sigma,E_\sigma,\Gamma_\sigma,s_\sigma,\psi_\sigma)$ is called a Kuranishi neighborhood of $\sigma\in\mathcal{M}$.

To define virtual fundamental chains, we need Kuranishi spaces with extra structures.
\begin{defn}[Definitions A1.14, A1.17 in \cite{FOOO06}]
A Kuranishi space is said to have a tangent bundle if the differential of $s_\sigma$ in the direction of the normal bundle induces a bundle isomorphism
\begin{equation}\label{tangent_bundle}
ds_\sigma:\frac{\varphi_{\sigma\tau}^*TV_\sigma}{TV_{\sigma\tau}}\cong
\frac{E_\sigma\times V_{\sigma\tau}}{\hat\varphi_{\sigma\tau}(E_\tau\times V_{\sigma\tau})}
\end{equation}
as $\Gamma_\tau$-equivariant bundles on $V_{\sigma\tau}$.

For a Kuranishi space with tangent bundle, an orientation consists of trivializations of $\Lambda^\textrm{top}E_\sigma^*\otimes\Lambda^\textrm{top}TV_\sigma$ compatible with the isomorphisms (\ref{tangent_bundle}).
\end{defn}
We will not give the precise definition of multisections here. See Definitions A1.19, A1.21 in \cite{FOOO06} for details. Roughly speaking, a multisection $\mathfrak{s}$ is a system of multi-valued perturbations $\{s_\sigma':V_\sigma\to E_\sigma\}$ of the Kuranishi maps $\{s_\sigma:V_\sigma\to E_\sigma\}$ satisfying certain compatibility conditions. For a Kuranishi space $\mathcal{M}$ with tangent bundle, there exist (a family of) multisections $\mathfrak{s}$ which are transversal to 0 (Theorem A1.23 in \cite{FOOO06}). Furthermore, suppose that $\mathcal{M}$ is oriented. Let $ev:\mathcal{M}\to Y$ be a strongly smooth map to a smooth manifold $Y$, i.e. a family of $\Gamma_\sigma$-invariant smooth maps $\{ev_\sigma:V_\sigma\to Y\}$ such that $ev_\sigma\circ\varphi_{\sigma\tau}=ev_\tau$ on $V_{\sigma\tau}$. Then, using these transversal multisections, one can define the virtual fundamental chain $ev_*([\mathcal{M}]^{\textrm{vir}})$ as a $\bQ$-singular chain in $Y$ (Definition A1.28 in \cite{FOOO06}).

We will also need the notion of fiber products of Kuranishi spaces. See Appendix A1.2 in \cite{FOOO06} for more details. As before, let $ev:\mathcal{M}\to Y$ be a strongly smooth map from a Kuranishi space $\mathcal{M}$ to a smooth manifold $Y$. Suppose that $ev$ is weakly submersive, i.e. each $ev_\sigma:V_\sigma\to Y$ is a submersion. Let $W$ be another manifold and $g:W\to Y$ be a smooth map. Consider the fiber product
$$\mathcal{Z}=\mathcal{M}\times_Y W=\{(\sigma,p)\in\mathcal{M}\times W:ev(\sigma)=q(p)\}.$$
\begin{defn}[Definition A1.37 in \cite{FOOO06}]\label{fiber_product}
Let $(\sigma,p)\in\mathcal{Z}$ and $(V_\sigma,E_\sigma,\Gamma_\sigma,s_\sigma,\psi_\sigma)$ be a Kuranishi neighborhood of $\sigma\in\mathcal{M}$. We set
$$V_{(\sigma,p)}=\{(\tau,q)\in V_\sigma\times W:ev_\sigma(\tau)=g(q)\}.$$
Then $V_{(\sigma,p)}$ is a smooth manifold since $ev_\sigma$ is a submersion. We also set $E_{(\sigma,p)}=E_\sigma, \Gamma_{(\sigma,p)}=\Gamma_\sigma$ and define $s_{(\sigma,p)},\psi_{(\sigma,p)}$ in the obvious way. This defines a Kuranishi neighborhood of $(\sigma,p)\in\mathcal{Z}$, and they glue together to give a Kuranishi structure on $\mathcal{Z}$.
\end{defn}
\begin{lem}[Lemma A1.39 in \cite{FOOO06}]\label{lem2.1}
If the Kuranishi space $\mathcal{M}$ has a tangent bundle, so does the Kuranishi structure on $\mathcal{Z}$. Furthermore, if the Kuranishi structure on $\mathcal{M}$ and the manifolds $Y$, $W$ are all oriented, so is the Kuranishi structure on $\mathcal{Z}$.
\end{lem}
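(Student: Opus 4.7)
The plan is to verify directly the two defining conditions---the tangent bundle isomorphism \eqref{tangent_bundle} and the orientation compatibility---from the explicit construction in Definition \ref{fiber_product}. The key conceptual point is that the fiber product construction modifies only the $V$-factors (by intersecting with the graph of $g$), while leaving the obstruction spaces $E_\sigma$, the groups $\Gamma_\sigma$, the Kuranishi maps $s_\sigma$ and the bundle maps $\hat\varphi_{\sigma\tau}$ unchanged. Concretely, for $\tau\in\textrm{Im }\psi_\sigma$ the transition datum on $\mathcal{Z}$ reads $V_{(\sigma,p)(\tau,p)}=\{(\xi,q)\in V_{\sigma\tau}\times W:ev_\tau(\xi)=g(q)\}$, with embedding $\varphi_{(\sigma,p)(\tau,p)}(\xi,q)=(\varphi_{\sigma\tau}(\xi),q)$ and bundle map $\hat\varphi_{(\sigma,p)(\tau,p)}=\hat\varphi_{\sigma\tau}$ under the identification $E_{(\sigma,p)}=E_\sigma$.

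For the tangent bundle condition, the submersion hypothesis on $ev_\tau$ yields, at each $(\xi,q)\in V_{(\tau,p)}$, a short exact sequence
$$0\to T_{(\xi,q)}V_{(\tau,p)}\to T_\xi V_\tau\oplus T_q W\to T_{ev_\tau(\xi)}Y\to 0.$$
Since $\varphi_{(\sigma,p)(\tau,p)}$ acts as $\varphi_{\sigma\tau}$ on the $V$-factor and as the identity on the $W$-factor, the analogous sequences on $V_{(\sigma,p)}$ and $V_{(\sigma,p)(\tau,p)}$ assemble into a $\Gamma_\tau$-equivariant identification
$$\frac{\varphi_{(\sigma,p)(\tau,p)}^*TV_{(\sigma,p)}}{TV_{(\sigma,p)(\tau,p)}}\cong\frac{\varphi_{\sigma\tau}^*TV_\sigma}{TV_{\sigma\tau}}.$$
Combined with the tangent bundle condition for $\mathcal{M}$ and with the relation $s_{(\sigma,p)}=s_\sigma\circ\textrm{pr}_1$, this immediately produces \eqref{tangent_bundle} for $\mathcal{Z}$.

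For the orientation statement, the same short exact sequence gives a canonical isomorphism
$$\Lambda^{\textrm{top}}TV_{(\sigma,p)}\otimes\Lambda^{\textrm{top}}TY\cong\Lambda^{\textrm{top}}TV_\sigma\otimes\Lambda^{\textrm{top}}TW$$
of bundles on $V_{(\sigma,p)}$. Tensoring with $\Lambda^{\textrm{top}}E_{(\sigma,p)}^*=\Lambda^{\textrm{top}}E_\sigma^*$, the trivialization coming from the orientation of $\mathcal{M}$ together with the fixed orientations on $Y$ and $W$ yields a trivialization of $\Lambda^{\textrm{top}}E_{(\sigma,p)}^*\otimes\Lambda^{\textrm{top}}TV_{(\sigma,p)}$. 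Compatibility with the normal-bundle identification on $\mathcal{Z}$ reduces at once to the corresponding compatibility on $\mathcal{M}$, because all data in the $W$- and $Y$-directions are transported by the identity. The main obstacle is purely bookkeeping: one must chase $\Gamma_\tau$-equivariance and a consistent sign convention through the short exact sequence, but there is no nontrivial geometric input beyond the submersion assumption on $ev$.
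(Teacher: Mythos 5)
Your argument is correct: the observation that the fiber product construction changes only the $V$-factors while fixing $E_\sigma$, $\Gamma_\sigma$, $s_\sigma$ and $\hat\varphi_{\sigma\tau}$, together with the short exact sequence coming from the submersion hypothesis on $ev_\tau$, is exactly what is needed to transport both the tangent-bundle isomorphism \eqref{tangent_bundle} and the trivializations of $\Lambda^\textrm{top}E_\sigma^*\otimes\Lambda^\textrm{top}TV_\sigma$ to $\mathcal{Z}$. The paper itself gives no proof of this lemma --- it is quoted verbatim from Lemma A1.39 of \cite{FOOO06} --- and your verification matches the standard argument given there, so there is nothing further to reconcile.
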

Let $\hat{ev}:\mathcal{Z}\to W$ be the projection map. We remark that this is a strongly smooth map. The following lemma is crucial to the proof of our main result.
\begin{lem}[Lemma A1.43 in \cite{FOOO06}]\label{lem2.2}
Suppose that $Y$ and $W$ are oriented and compact without boundary, and $\partial\mathcal{M}=\emptyset$. Then we have
$$\textrm{PD}(\hat{ev}_*([\mathcal{Z}]^\textrm{vir}))=g^*(\textrm{PD}(ev_*([\mathcal{M}]^\textrm{vir}))),$$
where $\textrm{PD}$ denotes Poincar\'e dual.
\end{lem}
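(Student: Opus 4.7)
The plan is to compute both sides using a single transversal multisection that is compatible with the fiber-product construction of Definition \ref{fiber_product}, and then reduce to the classical base-change identity for smooth manifolds. First I would fix a family of transversal multisections $\mathfrak{s}=\{s'_\sigma:V_\sigma\to E_\sigma\}$ on $\mathcal{M}$ which defines the singular chain $ev_*([\mathcal{M}]^{\textrm{vir}})$ in $Y$. By Definition \ref{fiber_product}, each Kuranishi chart of $\mathcal{Z}$ has the form $V_{(\sigma,p)}=V_\sigma\times_Y W$ with the \emph{same} obstruction space $E_{(\sigma,p)}=E_\sigma$, and it is a smooth manifold because $ev_\sigma$ is a submersion (weak submersivity of $ev$). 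Pulling $s'_\sigma$ back along the first projection $V_\sigma\times_Y W\to V_\sigma$ yields a multisection $\tilde{\mathfrak{s}}=\{\tilde s'_{(\sigma,p)}\}$ on $\mathcal{Z}$; the gluing and $\Gamma_\sigma$-equivariance conditions for $\tilde{\mathfrak{s}}$ follow at once from those for $\mathfrak{s}$.

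Next I would verify that $\tilde{\mathfrak{s}}$ is itself transversal to the zero section, so that it computes $[\mathcal{Z}]^{\textrm{vir}}$. At any zero of $s'_\sigma$ the differential $ds'_\sigma:TV_\sigma\to E_\sigma$ is surjective by transversality, and the differential of $\tilde s'_{(\sigma,p)}$ is the composition of $ds'_\sigma$ with the surjection $T(V_\sigma\times_Y W)=TV_\sigma\times_{TY}TW\to TV_\sigma$, which is still surjective because $ev_\sigma$ is a submersion. Its zero locus inside $V_{(\sigma,p)}$ is therefore exactly $(s'_\sigma)^{-1}(0)\times_Y W$, and passing to $\Gamma_\sigma$-quotients and then projecting by $\hat{ev}$ shows that the chain $\hat{ev}_*([\mathcal{Z}]^{\textrm{vir}})$ is literally the geometric preimage $g^{-1}(ev_*([\mathcal{M}]^{\textrm{vir}}))$, interpreted as a $\bQ$-singular chain in $W$.

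Finally, the elementary base-change identity $\textrm{PD}_W(g^{-1}(C))=g^*(\textrm{PD}_Y(C))$, valid for any rational singular cycle $C\subset Y$ transverse to the smooth map $g$ between oriented closed manifolds, applied to $C=ev_*([\mathcal{M}]^{\textrm{vir}})$, yields the desired equality. Transversality of $C$ to $g$ can be arranged by perturbing $\mathfrak{s}$ slightly within the class of transversal multisections, which does not change $ev_*([\mathcal{M}]^{\textrm{vir}})$ as a $\bQ$-homology class by the invariance property used already in the definition of open Gromov--Witten invariants.

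The main obstacle I expect is the orientation bookkeeping rather than the topological content. One has to check that the orientation on $[\mathcal{Z}]^{\textrm{vir}}$ provided by Lemma \ref{lem2.1} agrees on the nose with the orientation that $(s'_\sigma)^{-1}(0)\times_Y W$ inherits as a geometric fiber product via the conventions \[ T(V_\sigma\times_Y W)\oplus TY \;\cong\; TV_\sigma\oplus TW, \] and that these conventions are the ones under which the base-change PD formula holds with the stated sign. This is precisely the kind of sign calculation carried out in Appendix A1.2 of \cite{FOOO06}; once it is invoked, the rest of the argument is formal.
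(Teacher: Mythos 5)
The paper gives no proof of this lemma; it is quoted verbatim from Lemma A1.43 of \cite{FOOO06}, so the comparison is with the argument in that reference. Your overall strategy --- pull a transversal multisection back to the fiber-product charts, identify the perturbed zero locus of $\mathcal{Z}$ with the geometric fiber product over $Y$ of the perturbed zero locus of $\mathcal{M}$ with $W$, and then invoke the classical base-change identity for Poincar\'e duals --- is indeed the strategy of that reference. However, there is a genuine error in your second paragraph. The first projection $T(V_\sigma\times_Y W)=TV_\sigma\times_{TY}TW\to TV_\sigma$ is \emph{not} surjective in general: its image is $(d\,ev_\sigma)^{-1}(\mathrm{Im}\,dg)$, so it is surjective precisely when $g$ is a submersion, not because $ev_\sigma$ is one --- you have the roles of the two maps reversed. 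In the application of Section \ref{sec4}, $W=\{p\}$ is a point, $dg=0$, and the image of this projection is only $\ker(d\,ev_\sigma)$. Consequently the restricted multisection $\tilde s'_{(\sigma,p)}$ is \emph{not} automatically transversal to $0$; it fails, for instance, at a zero where $ds'_\sigma$ is surjective on $TV_\sigma$ but vanishes on $\ker(d\,ev_\sigma)$ (take $V_\sigma=\bR^2$, $E_\sigma=\bR$, $s'_\sigma(x,y)=x$, $ev_\sigma(x,y)=x$).

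The fix is exactly the condition you mention only in passing in your final paragraph, and it must be imposed \emph{before} the transversality claim rather than after: choose $\mathfrak{s}$ so that, in addition to being transversal to $0$, the restriction of $ev_\sigma$ to $(s'_\sigma)^{-1}(0)$ is transverse to $g$, i.e. $d\,ev_\sigma(\ker ds'_\sigma)+\mathrm{Im}\,dg=T Y$ at every relevant point. Under that hypothesis a short diagram chase does show that $d\tilde s'_{(\sigma,p)}$ is surjective, the zero locus is the fiber product you describe, and the rest of your argument (base-change identity, orientation bookkeeping deferred to Appendix A1.2 of \cite{FOOO06}) goes through. Two further caveats: this extra genericity is not free --- it requires a Sard-type argument within the space of multisections, compatible with the coordinate changes of the Kuranishi structure --- and the object to which ``transverse to $g$'' applies is the map $ev|_{\mathfrak{s}^{-1}(0)}$, not the pushed-forward singular chain $C$, which need not be an embedded cycle. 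With these corrections your proof is essentially the one in \cite{FOOO06}.
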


\section{A class of semi-Fano toric manifolds}\label{sec3}

Let $Y$ be an $(n-1)$-dimensional toric Fano manifold. Denote by $K_Y$ its canonical line bundle. Consider the $\bP^1$-bundle $X=\bP(K_Y\oplus\mathcal{O}_Y)$ over $Y$. In this section, we shall establish some elementary properties of the toric manifold $X$ which will be of use later.

Let $e_1,\ldots,e_n$ be the standard basis of a rank $n$ lattice $N\cong\bZ^n$, and let $N'=\{v=\sum_{j=1}^nv^je_j\in N|v^n=0\}\cong\bZ^{n-1}$. Let $N_\bR=N\otimes_\bZ\bR$, $N'_\bR=N'\otimes_\bZ\bR$. Without loss of generality, we can choose the primitive generators of the 1-dimensional cones of the fan $\Delta$ in $N_\bR$ defining $X$ to be
$$v_0=e_n,v_1=w_1+e_n,\ldots,v_m=w_m+e_n,v_{m+1}=-e_n,$$
where $w_1,\ldots,w_m\in N'$ are the primitive generators of the 1-dimensional cones of a fan $\Delta'$ in $N'_\bR$ defining $Y$.
\begin{lem}\label{lem3.1}
Let $h\in H_2(X,\bZ)$ be the fiber class of the $\bP^1$-bundle $X=\bP(K_Y\oplus\mathcal{O}_Y)$. Let $\iota_0:Y\hookrightarrow X=\bP(K_Y\oplus\mathcal{O}_Y)$ be the closed embedding which maps $Y$ to the zero section of $K_Y$. $\iota_0$ induces an embedding $\iota_{0*}:H_2(Y,\bZ)\hookrightarrow H_2(X,\bZ)$. Then we have
$$H_2^\textrm{eff}(X,\bZ)\cong\bZ_{\geq0}h\oplus\iota_{0*}H_2^\textrm{eff}(Y,\bZ).$$
Here, the superscript "eff" refers to effective classes. Moreover, we have $c_1(h)=2$ and $c_1(\alpha)=0$ for any $\alpha\in\iota_{0*}H_2^\textrm{eff}(Y,\bZ)$. In particular, $X$ is semi-Fano, i.e. the anti-canonical bundle $K_X^{-1}$ is nef.
\end{lem}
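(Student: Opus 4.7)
My plan is to combine the $\bP^1$-bundle structure $\pi:X\to Y$ with the explicit toric data listed above. As a first step, since $D_0$ is a holomorphic section of $\pi$, the projective bundle formula (or Leray--Hirsch with the section) gives a canonical splitting $H_2(X,\bZ)=\bZ h\oplus\iota_{0*}H_2(Y,\bZ)$, with every class written uniquely as $\alpha=nh+\iota_{0*}\beta$ (one recovers $n=D_{m+1}\cdot\alpha$ using that $D_{m+1}$ is disjoint from $D_0$ and meets $h$ once, and $\beta=\pi_*\alpha$). This settles the group-theoretic decomposition; the actual content of the lemma is to identify the effective sub-cone inside this splitting and to compute $c_1$ on each summand.

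For $c_1$, the plan is to compute a handful of intersection numbers using the geometry of the two sections. Since $D_0,D_{m+1}$ are disjoint sections of $\pi$ and $D_1,\ldots,D_m=\pi^{-1}(D_j^Y)$ pull back from the toric prime divisors $D_j^Y\subset Y$, one reads off $D_i\cdot h=\delta_{i,0}+\delta_{i,m+1}$. The key input for intersecting with $\iota_{0*}\beta$ is that the normal bundle of the zero section $D_0=\iota_0(Y)$ inside $X=\bP(K_Y\oplus\mathcal{O}_Y)$ is precisely $K_Y$, which gives $D_0\cdot\iota_{0*}\beta=c_1(K_Y)\cdot\beta=-c_1^Y(\beta)$; restricting to $D_0\cong Y$ gives $D_j\cdot\iota_{0*}\beta=D_j^Y\cdot\beta$ for $j=1,\ldots,m$; and $D_{m+1}\cdot\iota_{0*}\beta=0$ by disjointness. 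Substituting these into the toric formula $c_1(X)=\sum_{i=0}^{m+1}[D_i]$ and using $c_1(Y)=\sum_{j=1}^m[D_j^Y]$ yields $c_1(h)=2$ and $c_1(\iota_{0*}\beta)=-c_1^Y(\beta)+c_1^Y(\beta)+0=0$.

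For the effective cone I would invoke the standard fact that the Mori cone of a smooth projective toric variety is generated by the classes of the torus-invariant curves attached to walls (codimension-one cones) of its fan. In our $\Delta$ the walls split into three families: the lifts of top-dimensional cones $\sigma$ of $\Delta'$, whose invariant curve is a $\pi$-fiber of class $h$; cones of the form $\tau+\bR_{\geq 0}v_0$ for walls $\tau$ of $\Delta'$, whose invariant curves lie in $D_0\cong Y$ and realize $\iota_{0*}\beta_\tau$, where $\beta_\tau\in H_2(Y,\bZ)$ is the corresponding toric-invariant curve class; and cones $\tau+\bR_{\geq 0}v_{m+1}$, whose invariant curves lie in the other section $D_{m+1}$. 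The main obstacle is family (iii), and it is where the Fano hypothesis on $Y$ enters essentially: applying the intersection numbers above together with the splitting of $H_2(X,\bZ)$, one identifies this curve class as $\iota_{0*}\beta_\tau+c_1^Y(\beta_\tau)\,h$, which lies in $\bZ_{\geq 0}h\oplus\iota_{0*}H_2^{\textrm{eff}}(Y,\bZ)$ precisely because $Y$ being Fano forces $c_1^Y(\beta_\tau)\geq 0$. Since the $\beta_\tau$ likewise generate the Mori cone of $Y$, this yields the inclusion $H_2^{\textrm{eff}}(X,\bZ)\subseteq\bZ_{\geq 0}h\oplus\iota_{0*}H_2^{\textrm{eff}}(Y,\bZ)$, and the reverse inclusion is obvious. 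The semi-Fano assertion is then immediate, because $c_1$ is nonnegative on both summands of the decomposition.
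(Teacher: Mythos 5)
Your argument is correct, but it takes a genuinely different route from the paper. The paper works entirely with Batyrev's combinatorics of primitive collections and primitive relations: it observes that $\mathscr{P}_0=\{v_0,v_{m+1}\}$ is a primitive collection whose relation $v_0+v_{m+1}=0$ gives $h$ with $c_1(h)=2$, invokes Batyrev's Proposition 4.1 to see that every other primitive collection is disjoint from $\mathscr{P}_0$ and hence descends to a primitive collection of $\Delta'$, and then lifts the primitive relation $\sum_k w_{i_k}=\sum_l n_l w_{j_l}$ of $Y$ to $\sum_k v_{i_k}=\sum_l n_l v_{j_l}+(p-\sum_l n_l)v_0$, where the Fano hypothesis guarantees the coefficient $p-\sum_l n_l=c_1(Y)\cdot\gamma$ of $v_0$ is positive; the vanishing of $c_1$ on $\iota_{0*}\gamma$ is then read off from the coefficients. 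You instead use the projective-bundle splitting of $H_2$, the identification $N_{D_0/X}\cong K_Y$ to compute intersection numbers, and Reid's description of the toric Mori cone by wall curves, with the Fano hypothesis entering to place the invariant curves in the infinity section $D_{m+1}$ (of class $\iota_{0*}\beta_\tau+c_1^Y(\beta_\tau)h$) inside the positive span. Both arguments are sound and both silently pass from cone generators to the integral monoid $H_2^{\textrm{eff}}$ at the same level of rigor. What your approach buys is geometric transparency and independence from Batyrev's classification result; what the paper's approach buys is the explicit list of primitive relations, which is reused downstream (the proof of Lemma \ref{lem3.2} quotes the primitive relation for $h+\alpha$, and the proof of Proposition \ref{prop4.2} uses the fact, immediate from those relations, that a class with $c_1=0$ is never a positive integral combination of the $\beta_i$'s). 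If one adopted your proof verbatim, those two later arguments would need small supplements, though the corresponding facts also follow readily from your intersection-number computations.
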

\begin{proof}
Recall that a subset $\mathscr{P}=\{v_{i_1},\ldots,v_{i_p}\}\subset\{v_0,\ldots,v_{m+1}\}$ is called a \textit{primitive collection} if for each $1\leq k\leq p$, the elements of $\mathscr{P}\setminus\{x_{i_k}\}$ generate a $(p-1)$-dimensional cone in $\Delta$ but $\mathscr{P}$ itself does not generate a cone in $\Delta$ (see Batyrev \cite{B91}). The \textit{focus} $\sigma(\mathscr{P})$ of $\mathscr{P}$ is the cone in $\Delta$ of the smallest dimension which contains $v_{i_1}+\ldots+v_{i_p}$. Let $v_{j_1},\ldots,v_{j_q}$ be the generators of $\sigma(\mathscr{P})$. Then there exists positive integers $n_1,\ldots,n_q$ such that
$$v_{i_1}+\ldots+v_{i_p}=n_1v_{j_1}+\ldots+n_qv_{j_q}.$$
This is known as a \textit{primitive relation}. Recall that the homology group $H_2(X,\bZ)$ is given by the kernel of the surjective map $\bZ^d\rightarrow N,\ E_i\mapsto v_i$, where $\{E_1,\ldots,E_d\}$ is the standard basis of $\bZ^d$. Also, the effective cone $H_2^\textrm{eff}(X,\bZ)$ is generated by primitive relations.

In our case, $\mathscr{P}_0:=\{v_0,v_{m+1}\}$ is obviously a primitive collection for $\Delta$. The primitive relation $v_0+v_{m+1}=0$ corresponds to the fiber class $h$ of the $\bP^1$-bundle $X\to Y$. It is obvious that we have $c_1(h)=c_1(X)\cdot h=2$.

By Proposition 4.1 in Batyrev \cite{B91}, we have $\mathscr{P}\cap\mathscr{P}_0=\emptyset$ for any other primitive collection $\mathscr{P}\neq\mathscr{P}_0$. Suppose that $\mathscr{P}\neq\mathscr{P}_0$ is a primitive collection consisting of the elements $v_{i_1}=w_{i_1}+e_n,\ldots,v_{i_p}=w_{i_p}+e_n$, where $1\leq i_1<\ldots<i_p\leq m$. Then $\mathscr{P}':=\{w_{i_1},\ldots,w_{i_p}\}$ is obviously a primitive collection for the fan $\Delta'$ defining $Y$. Now, let $w_{j_1},\ldots,w_{j_q}$ be the generators of the focus $\sigma(\mathscr{P}')$ of $\mathscr{P}'$. The primitive relation for $\Delta'$ is given by
\begin{equation}\label{P'}
w_{i_1}+\ldots+w_{i_p}=n_1w_{j_1}+\ldots+n_qw_{j_q},
\end{equation}
for some $n_1,\ldots,n_q\in\bZ_{>0}$. Let $\gamma\in H_2^\textrm{eff}(Y,\bZ)$ be the corresponding effective class. Since $Y$ is Fano, we have $p-n_1-\ldots-n_q=c_1(Y)\cdot\gamma>0$. In terms of the $v_i$'s, (\ref{P'}) becomes a primitive relation
$$v_{i_1}+\ldots+v_{i_p}=n_1v_{j_1}+\ldots+n_qv_{j_q}+(p-n_1-\ldots-n_q)v_0$$
for $\Delta$. This corresponds to the class $\alpha:=\iota_{0*}(\gamma)\in H_2^\textrm{eff}(X,\bZ)$, whose Chern number is given by $c_1(\alpha)=p-n_1-\ldots-n_q-(p-n_1-\ldots-n_q)=0$.
\end{proof}
As usual, denote by $D_0,D_1,\ldots,D_m,D_{m+1}$ the toric prime divisors corresponding to the primitive generators $v_0,v_1,\ldots,v_m,v_{m+1}$ respectively. Note that $D_0=\iota_0(Y)$.
\begin{lem}\label{lem3.2}
Let $\varphi:\bP^1\rightarrow X$ be a nonconstant holomorphic map from $\bP^1$ to $X$.
\begin{enumerate}
\item[(1)] Suppose that $[\varphi(\bP^1)]=h+\alpha\in H_2(X,\bZ)$ for some $\alpha\in\iota_{0*}H_2^\textrm{eff}(Y,\bZ)$. Then $\varphi(\bP^1)$ is contained in one of the toric prime divisors $D_0,D_1,\ldots,D_m$.
\item[(2)] Suppose that $[\varphi(\bP^1)]=\alpha\in\iota_{0*}H_2^\textrm{eff}(Y,\bZ)$. Then $\varphi(\bP^1)$ is contained in the toric prime divisor $D_0$.
\end{enumerate}
\end{lem}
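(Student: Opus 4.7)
The proof rests on a single intersection-theoretic identity: since $D_0 \cong Y$ is the zero section of $K_Y$, and $K_Y \subset X = \bP(K_Y \oplus \mathcal{O}_Y)$ sits as the open complement of $D_{m+1}$, the normal bundle of $D_0$ in $X$ is $K_Y$. Hence
$$D_0 \cdot \iota_{0*}(\gamma) = c_1(K_Y) \cdot \gamma = -c_1(Y) \cdot \gamma$$
for any $\gamma \in H_2(Y, \bZ)$. Combined with the standard fact that an irreducible curve $C$ not contained in a divisor $D$ satisfies $D \cdot C \geq 0$, part (2) follows immediately: $\varphi$ nonconstant forces $\alpha = \iota_{0*}(\gamma)$ with $\gamma \in H_2^\textrm{eff}(Y,\bZ)$ nonzero, and since $Y$ is Fano we get $D_0 \cdot \alpha = -c_1(Y) \cdot \gamma < 0$, hence $\varphi(\bP^1) \subset D_0$.

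For part (1), I compose with the bundle projection $p: X \to Y$ to get $p \circ \varphi: \bP^1 \to Y$, whose class is $\gamma \in H_2^\textrm{eff}(Y, \bZ)$ with $\alpha = \iota_{0*}(\gamma)$, since $p_*(h) = 0$. The identity
$$D_0 \cdot (h + \alpha) = 1 - c_1(Y) \cdot \gamma$$
shows that if $c_1(Y) \cdot \gamma \geq 2$ then $D_0 \cdot (h+\alpha) < 0$, forcing $\varphi(\bP^1) \subset D_0$; but a curve in $D_0$ has class in $\iota_{0*} H_2(Y, \bZ)$, contradicting $h \notin \iota_{0*} H_2(Y, \bZ)$ (indeed $p_*h = 0$ while $p_* \circ \iota_{0*} = \textrm{id}$). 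Hence $c_1(Y) \cdot \gamma \leq 1$, and since $Y$ is Fano, either $\gamma = 0$ or $c_1(Y) \cdot \gamma = 1$.

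In the nontrivial case $c_1(Y) \cdot \gamma = 1$, I would use Batyrev's description of $\overline{NE}(Y)$ as generated by primitive classes, together with integrality of $\gamma$ and the minimality $c_1 = 1$, to argue that $\gamma$ is itself a primitive class arising from a primitive relation $w_{i_1} + \cdots + w_{i_p} = n_1 w_{j_1} + \cdots + n_q w_{j_q}$ in $\Delta'$. The Fano identity $c_1(Y) \cdot \gamma = p - \sum_l n_l = 1$ with $p \geq 2$ forces $\sum_l n_l \geq 1$, so some $n_l > 0$ and $D'_{j_l} \cdot \gamma = -n_l < 0$. Reapplying the positivity principle inside $Y$, the projected curve $(p \circ \varphi)(\bP^1)$ must lie in $D'_{j_l}$, which lifts to $\varphi(\bP^1) \subset p^{-1}(D'_{j_l}) = D_{j_l}$.

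The main obstacle is the primitivity step: ruling out that $\gamma$ is a nontrivial $\bQ_{\geq 0}$-combination of distinct extremal generators, which requires careful use of integrality and the polyhedral structure of $\overline{NE}(Y)$ for smooth toric Fanos. The degenerate case $\gamma = 0$ (so $\alpha = 0$, $[\varphi] = h$, and $\varphi(\bP^1)$ is a fiber of $p$) falls outside this argument and must be handled separately; it contributes only the straightforward fiber-counting portion of the moduli comparison carried out in Section \ref{sec4}.
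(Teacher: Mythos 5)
Your part (2) and the intersection-theoretic skeleton of part (1) are correct and essentially the paper's own approach: the paper likewise exhibits, for each class in question, a toric prime divisor meeting it negatively and concludes containment of the irreducible curve $\varphi(\bP^1)$. The identities $D_0\cdot\iota_{0*}\gamma=-c_1(Y)\cdot\gamma$ and $D_0\cdot(h+\alpha)=1-c_1(Y)\cdot\gamma$ are right, and your remark that $\alpha=0$ must be excluded is fair (the paper's proof also tacitly assumes $\alpha\neq0$, and the lemma is only ever invoked either with $\alpha\neq0$ or precisely to force an $\alpha'$ to vanish).

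The genuine gap is the case $c_1(Y)\cdot\gamma=1$ in part (1), which you yourself flag as ``the main obstacle.'' Your route --- show that $\gamma$ is itself a primitive class in Batyrev's sense and then read off a negative coefficient from its primitive relation --- is left unproven, and without primitivity the argument really does fail: if $\gamma$ were a positive combination of several primitive classes, their individual negative intersections could occur against different divisors and be cancelled by the positive contributions of the other summands. The paper sidesteps all of this with a two-line observation. Writing $D_i'$ for the toric prime divisor of $Y$ corresponding to $w_i$, the isomorphism $H_2(Y,\bZ)\cong\ker(\bZ^m\to N',\ E_i\mapsto w_i)$ identifies $\gamma$ with the integer vector $(a_1,\ldots,a_m)$, $a_i=D_i'\cdot\gamma$, so that $\sum_{i=1}^m a_iw_i=0$ while $\sum_{i=1}^m a_i=c_1(Y)\cdot\gamma=1$. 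If all $a_i\geq0$, then exactly one $a_{i_0}$ equals $1$ and the rest vanish, forcing $w_{i_0}=0$, which is absurd since the $w_i$ are primitive. Hence some $a_i<0$, i.e. $D_i\cdot(h+\alpha)=D_i'\cdot\gamma<0$ for some $1\leq i\leq m$, and $\varphi(\bP^1)\subset D_i$. (Two smaller points: in your $c_1(Y)\cdot\gamma\geq2$ branch the containment $\varphi(\bP^1)\subset D_0$ already proves the lemma, so the further contradiction only shows that branch is vacuous; and the same kernel argument, applied to $\alpha$ itself, is how the paper disposes of part (2), in agreement with your normal-bundle computation.)
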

\begin{proof}
Suppose that $\varphi:\bP^1\rightarrow X$ is a nonconstant holomorphic map with class $h+\alpha$ for some $\alpha\in\iota_{0*}H_2^\textrm{eff}(Y,\bZ)$. From the proof of the above lemma, we know that the class $h+\alpha$ corresponds to the primitive relation
$$(1-\sum_{i=1}^ma_i)v_0+\sum_{i=1}^ma_iv_i+v_{m+1}=0.$$
Moreover, we have $\sum_{i=1}^ma_i\geq1$, and if $\sum_{i=1}^ma_i=1$, then there exists $1\leq i\leq m$ such that $a_i<0$. Hence there exists $0\leq i\leq m$ such that $D_i\cdot\varphi(\bP^1)=D_i\cdot(h+\alpha)<0$. This implies that $\varphi(\bP^1)$ is contained in $D_i$. This proves (1). (2) can be proved in the same way.
\end{proof}

\section{Proof of Theorem \ref{main_thm}}\label{sec4}

We can now start our proof of Theorem \ref{main_thm}.

We equip $X=\bP(K_Y\oplus\mathcal{O}_Y)$ with a toric K\"ahler structure $\omega$. Let $L\subset X$ be a Lagrangian torus fiber of the associated moment map. For $i=0,1,\ldots,m,m+1$, let $\beta_i\in\pi_2(X,L)$ be the relative homotopy class such that $D_j\cdot\beta_i=\delta_{ij}$. Then each $\beta_i$ is a Maslov index two class with $\partial\beta_i=v_i$, where $\partial:\pi_2(X,L)\to\pi_1(L)$ is the boundary map, and $\pi_2(X,L)$ is generated by $\beta_0,\beta_1,\ldots,\beta_m,\beta_{m+1}$. Moreover, each $\beta_i$ is represented by a family of holomorphic disks $\varphi_i:(D^2,\partial D^2)\to(X,L)$. Here, $D^2=\{z\in\bC:|z|\leq1\}$ is the unit disk.

Fix a nonzero effective class $\alpha\in H_2^\textrm{eff}(X,\bZ)$ with $c_1(\alpha)=0$. Let $\overline{\mathcal{M}}_1(L,\beta_0+\alpha)$ be the moduli space of stable maps from genus 0 bordered Riemann surfaces to $(X,L)$ with one boundary marked point representing the class $\beta_0+\alpha$. To simplify notations, we denote $\overline{\mathcal{M}}_1(L,\beta_0+\alpha)$ by $\mathcal{M}^L$. Similarly, we denote by $\mathcal{M}^X$ the moduli space $\overline{\mathcal{M}}_{0,1}(X,h+\alpha)$ of genus 0 stable maps to $X$ with one marked point representing the class $h+\alpha$. We have evaluation maps\footnote{By a slight abuse of notations, we use $ev$ to denote both evaluation maps. It should clear from the context which one we are referring to.}
\begin{eqnarray*}
ev:\mathcal{M}^L\to L,\ ev:\mathcal{M}^X\to X.
\end{eqnarray*}

By \cite{FOOO06}, both $\mathcal{M}^L$ and $\mathcal{M}^X$ are oriented Kuranishi spaces with tangent bundles, and the evaluation maps are both strongly smooth and weakly submersive. The real virtual dimensions of $\mathcal{M}^L$ and $\mathcal{M}^X$ are $n$ and $2n$ respectively. Moreover, since $\mu(\beta_0+\alpha)=2$, we have $\partial\mathcal{M}^L=\emptyset$ by Corollary 11.5 in \cite{FOOO08a}. It is also well-known that $\mathcal{M}^X$ has no boundary. Hence, they define virtual fundamental cycles
$$ev_*([\mathcal{M}^L]^\textrm{vir})\in H_n(L,\bQ),\ ev_*([\mathcal{M}^X]^\textrm{vir})\in H_{2n}(X,\bQ).$$

Fix a point $p\in L\subset X$. Let $\iota:\{p\}\hookrightarrow L$ (resp. $\iota:\{p\}\hookrightarrow X$) be the inclusion of the point $p$. We can then apply the construction in Definition \ref{fiber_product} and Lemma \ref{lem2.1} to give oriented Kuranishi structures with tangent bundles on the spaces:
$$\mathcal{M}^L_p:=\mathcal{M}^L\times_L\{p\},\ \mathcal{M}^X_p:=\mathcal{M}^X\times_X\{p\}.$$
Both have real virtual dimension 0. Let $\hat{ev}:\mathcal{M}^L_p\to\{p\}$, $\hat{ev}:\mathcal{M}^X_p\to\{p\}$ be the induced (constant) maps. Then we have virtual fundamental cycles
$$\hat{ev}_*([\mathcal{M}^L_p]^\textrm{vir}),\hat{ev}_*([\mathcal{M}^X_p]^\textrm{vir})\in H_0(\{p\},\bQ)\cong\bQ.$$

Now Lemma \ref{lem2.2} says that
\begin{prop}\label{prop4.1} We have
\begin{eqnarray*}
\textrm{PD}(\hat{ev}_*([\mathcal{M}^L_p]^\textrm{vir})) & = & \iota^*\textrm{PD}(ev_*([\mathcal{M}^L]^\textrm{vir}))\\
\textrm{PD}(\hat{ev}_*([\mathcal{M}^X_p]^\textrm{vir})) & = & \iota^*\textrm{PD}(ev_*([\mathcal{M}^X]^\textrm{vir}))
\end{eqnarray*}
in $H^0(\{p\},\bQ)\cong\bQ$.
\end{prop}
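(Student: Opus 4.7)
The plan is to deduce Proposition \ref{prop4.1} directly from Lemma \ref{lem2.2}, applied to each of the two fiber product situations in turn. No new geometric input is needed; the work consists entirely of verifying that the hypotheses of Lemma \ref{lem2.2} are already in place in both cases.

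For the first equation, I would take $\mathcal{M}=\mathcal{M}^L$, $Y=L$, $W=\{p\}$, and $g=\iota:\{p\}\hookrightarrow L$, so that $\mathcal{Z}=\mathcal{M}\times_Y W=\mathcal{M}^L_p$ and the induced projection to $W$ is $\hat{ev}$. The needed hypotheses are: (a) $L$ and $\{p\}$ are oriented, compact, without boundary, which holds because $L$ is a Lagrangian torus; (b) $\partial\mathcal{M}^L=\emptyset$, which is recorded just above the proposition as a consequence of $\mu(\beta_0+\alpha)=2$ together with Corollary 11.5 of \cite{FOOO08a}; (c) $ev:\mathcal{M}^L\to L$ is weakly submersive, stated in the paragraph introducing $\mathcal{M}^L$; and (d) $\mathcal{M}^L$ is an oriented Kuranishi space with tangent bundle, also recorded there. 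With these checks in hand, Lemma \ref{lem2.2} directly yields
\[
\textrm{PD}(\hat{ev}_*([\mathcal{M}^L_p]^\textrm{vir}))=\iota^*\textrm{PD}(ev_*([\mathcal{M}^L]^\textrm{vir})).
\]

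For the second equation the argument is formally identical, with $\mathcal{M}=\mathcal{M}^X$, $Y=X$, $W=\{p\}$, and $g=\iota:\{p\}\hookrightarrow X$, giving $\mathcal{Z}=\mathcal{M}^X_p$. The space $X$ is compact, oriented (as a complex manifold) and without boundary; $\mathcal{M}^X$ is an oriented Kuranishi space with tangent bundle whose evaluation map is weakly submersive; and $\partial\mathcal{M}^X=\emptyset$ since the moduli space of closed stable maps has no boundary. Lemma \ref{lem2.2} then produces the second identity.

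Since everything is being invoked as a black box, there is essentially no obstacle; the only mild point worth being careful about is bookkeeping the Kuranishi structure on $\mathcal{Z}$ that is used on the left-hand side. One should note that this is precisely the Kuranishi structure on the fiber product supplied by Definition \ref{fiber_product} (whose hypothesis of weak submersiveness is verified as above), and that by Lemma \ref{lem2.1} it inherits a tangent bundle and an orientation from those of $\mathcal{M}^L$ (resp.\ $\mathcal{M}^X$) and of $L$ (resp.\ $X$). This ensures that $[\mathcal{M}^L_p]^\textrm{vir}$ and $[\mathcal{M}^X_p]^\textrm{vir}$ are defined unambiguously, so that Lemma \ref{lem2.2} applies in the stated form.
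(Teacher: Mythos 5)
Your proposal is correct and matches the paper exactly: Proposition \ref{prop4.1} is stated in the paper as an immediate application of Lemma \ref{lem2.2} to the two fiber products $\mathcal{M}^L\times_L\{p\}$ and $\mathcal{M}^X\times_X\{p\}$, with the hypotheses (orientation, tangent bundle, weak submersiveness, $\partial\mathcal{M}=\emptyset$) already recorded in the preceding paragraphs. Your explicit verification of those hypotheses is just a more careful write-up of the same argument.
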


Therefore, to prove Theorem \ref{main_thm}, it suffices to show that $\mathcal{M}^L_p$ and $\mathcal{M}^X_p$ have the same Kuranishi structures.

To do this, we first show that $\mathcal{M}^L_p$ can naturally be identified with $\mathcal{M}^X_p$ as a set. Let us recall the following results proved by Cho and Oh in \cite{CO03}, which holds for general toric manifolds.
\begin{thm}[Theorem 5.2 in \cite{CO03}; see also Theorem 11.1 in \cite{FOOO08a}]\label{Cho-Oh} Let $(X,\omega)$ be a toric K\"ahler manifold and $L$ be a Lagrangian torus fiber of its moment map. Let $D_1,\ldots,D_d$ be all the toric prime divisors in $X$ and $\beta_1,\ldots,\beta_d\in\pi_2(X,L)$ be the relative homotopy classes such that $D_j\cdot\beta_i=\delta_{ij}$.
\begin{enumerate}
\item[(1)] If $\varphi:(D^2,\partial D^2)\to(X,L)$ is a holomorphic map from a disk representing a Maslov index two class $\beta\in\pi_2(X,L)$, then $\beta=\beta_i$ for some $i\in\{1,\ldots,d\}$.
\item[(2)] For $i=1,\ldots,d$, let $\overline{\mathcal{M}}_1(L,\beta_i)$ be the moduli space of stable maps from genus 0 bordered Riemann surfaces to $(X,L)$ with one boundary marked point representing the class $\beta_i$. Then the evaluation map $ev:\overline{\mathcal{M}}_1(L,\beta_i)\to L$ is an orientation preserving diffeomorphism. In particular, for any $p\in L$ and any $i\in\{1,\ldots,d\}$, there is a unique (up to automorphisms of the domain) genus 0 bordered stable map whose boundary passes through $p$ and whose domain is a disk which represents the class $\beta_i$.
\end{enumerate}
\end{thm}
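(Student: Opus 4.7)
The plan is to leverage the toric homogeneous-coordinate description of $X$ together with the Maslov index formula for Lagrangian torus fibers, and to carry out the orientation check via a direct Fredholm analysis on a model disk.

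For part (1), I would start from the identity $\mu(\beta)=2\sum_i D_i\cdot\beta$, valid for any $\beta\in\pi_2(X,L)$ when $L$ is a toric Lagrangian torus fiber. The toric prime divisors $D_i$ are $J$-holomorphic for the toric complex structure, so positivity of intersection yields $D_i\cdot\beta\geq 0$ for every $i$ as soon as $\beta$ is represented by a non-constant holomorphic disk. The hypothesis $\mu(\beta)=2$ then forces $\sum_i D_i\cdot\beta=1$, and hence exactly one intersection number equals $1$ while the rest vanish. This gives $\beta=\beta_i$ for some $i\in\{1,\ldots,d\}$.

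For part (2), I would use the Cox presentation $X=(\bC^d\setminus Z)/G$, where $G\subset(\bC^*)^d$ is an algebraic subtorus of rank $d-n$ and $Z$ is the irrelevant locus. The preimage of $L$ is a $T^d$-orbit of the form $\tilde L=\{(z_1,\ldots,z_d):|z_j|=r_j\}$ for certain constants $r_j>0$. A disk $\varphi:(D^2,\partial D^2)\to(X,L)$ in class $\beta_i$ lifts to a holomorphic map $\tilde\varphi=(f_1,\ldots,f_d):D^2\to\bC^d$ whose boundary lies in $\tilde L$. Since $D_j\cdot\beta_i=\delta_{ij}$, the component $f_j$ is nowhere vanishing on the closed disk when $j\neq i$; applying the maximum modulus principle to $f_j$ and to $1/f_j$, together with $|f_j|\equiv r_j$ on $\partial D^2$, forces $f_j$ to be a constant of modulus $r_j$. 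The component $f_i$ vanishes once in the interior and has modulus $r_i$ on the boundary, so it equals $r_i$ times a degree-$1$ Blaschke product. After quotienting by reparametrization of $(D^2,\text{marked pt})$ and by $G$, the moduli space $\overline{\mathcal{M}}_1(L,\beta_i)$ is identified with an $n$-torus, and the evaluation map becomes a diffeomorphism onto $L$.

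The main obstacle is the orientation claim. I would verify it by linearizing $\bar\partial$ at a single model disk. Because $X$ is toric, the pulled-back tangent bundle $\varphi^*TX$ splits $T^n$-equivariantly into a sum of line bundles, one of Maslov index $2$ and $n-1$ of index $0$. Each factor carries a Riemann-Hilbert problem with vanishing cokernel, so the moduli problem is Fredholm regular and the Kuranishi obstruction bundle vanishes, making $\overline{\mathcal{M}}_1(L,\beta_i)$ a genuine smooth manifold of dimension $n$. The kernel of $\bar\partial$ at the model disk is described by holomorphic sections with totally real boundary values, and the distinguished basis coming from the $T^n$-trivialization of $\varphi^*TX$ is mapped by $d\,ev$ to the corresponding basis of $T_pL$. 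Comparing this with the orientation on the moduli space induced from the relative spin structure on $L$, as in Fukaya-Oh-Ohta-Ono, one concludes that $ev$ is orientation-preserving.
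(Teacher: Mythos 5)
First, note that the paper does not actually prove this statement: it is imported verbatim from Cho--Oh (Theorem 5.2 in \cite{CO03}) and Fukaya--Oh--Ohta--Ono (Theorem 11.1 in \cite{FOOO08a}), so there is no in-paper argument to compare against. That said, your proposal is in substance a faithful reconstruction of the Cho--Oh proof: part (1) via the identity $\mu(\beta)=2\sum_j D_j\cdot\beta$ together with positivity of the intersections of a nonconstant holomorphic disk (whose boundary lies in the open orbit, hence off the toric divisors) with each $D_j$; part (2) via lifting to the homogeneous coordinates $\bC^d\setminus Z$, using the maximum principle to force the components $f_j$, $j\neq i$, to be constants of modulus $r_j$ and $f_i$ to be a single Blaschke factor, and then checking Fredholm regularity of the resulting disks and the orientation via the standard spin structure on $L\cong T^n$. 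These steps are correct and are exactly the route taken in the cited references.

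There is one substantive omission. The theorem concerns the \emph{compactified} moduli space $\overline{\mathcal{M}}_1(L,\beta_i)$, whereas your classification only covers the open stratum $\mathcal{M}_1(L,\beta_i)$ of maps from an irreducible disk; you must also rule out stable maps in class $\beta_i$ with several components. By your own part (1) argument every nonconstant holomorphic disk has Maslov index at least $2$, so a bubbled configuration would consist of one Maslov-index-two disk in some class $\beta_{i'}$ plus sphere components of total Chern number $\leq 0$ --- and in the non-Fano setting of this paper, effective classes $\alpha$ with $c_1(\alpha)=0$ genuinely exist, so this is not vacuous. The key case (one disk plus spheres of total Chern number zero) is disposed of by a one-line argument you should include: a sphere class lies in $\ker\bigl(\partial:\pi_2(X,L)\to\pi_1(L)\bigr)$, i.e.\ satisfies $\sum_j(D_j\cdot\alpha)v_j=0$, whereas $\partial(\beta_i-\beta_{i'})=v_i-v_{i'}\neq0$ for $i\neq i'$; configurations with two or more nonconstant disks require spheres of negative total Chern number and need the separate analysis carried out in \cite{FOOO08a}. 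Relatedly, your orientation paragraph identifies the right ingredients but ends by asserting the comparison with the spin-structure orientation rather than performing it; in \cite{CO03} this is an actual computation. Since the paper itself treats the theorem as a black box, citing \cite{CO03} and \cite{FOOO08a} for these two points is acceptable, but a self-contained proof would need to supply them.
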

Now, let $\sigma^L=((\Sigma^L,z),\varphi)$ be representing a point in $\mathcal{M}^L_p$. This consists of a genus 0 bordered Riemann surface $\Sigma^L$ with a boundary marked point $z\in\partial\Sigma^L$ and a stable map $\varphi:(\Sigma^L,\partial\Sigma^L)\to(X,L)$ such that $\varphi(z)=p$.
\begin{prop}\label{prop4.2}
$\Sigma^L$ can be decomposed as $\Sigma^L=\Sigma^L_0\cup\Sigma_1$, where $\Sigma^L_0=D^2$ is a disk and $\Sigma_1$ is a genus 0 nodal curve, such that the restrictions $\varphi_0:=\varphi|_{\Sigma^L_0}$ and $\varphi_1:=\varphi|_{\Sigma_1}$ represent the classes $\beta_0$ and $\alpha$ respectively.
\end{prop}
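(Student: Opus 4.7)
The plan is to decompose the stable domain $\Sigma^L$ into its irreducible components, separate the disk components from the sphere components, and pin down the combinatorics by an additivity-of-Maslov-index count reinforced by the semi-Fano hypothesis. Since $\Sigma^L$ is a connected, arithmetic-genus-$0$ bordered nodal Riemann surface, each irreducible component is either a disk or a rational curve, glued along nodes into a tree. Denote the disk-component classes by $\beta^{(1)},\ldots,\beta^{(k)}\in\pi_2(X,L)$ and the sphere-component classes by $\alpha^{(1)},\ldots,\alpha^{(l)}\in H_2(X,\bZ)$; additivity gives
$$2=\mu(\beta_0+\alpha)=\sum_{j=1}^{k}\mu(\beta^{(j)})+2\sum_{j=1}^{l}c_1(\alpha^{(j)}).$$
Three inputs squeeze this identity: (i) $\partial\Sigma^L\neq\emptyset$ forces $k\geq 1$; (ii) by the Cho--Oh classification of holomorphic disks in toric manifolds, every non-constant disk class has $\mu(\beta^{(j)})\geq 2$; (iii) Lemma \ref{lem3.1} (semi-Fano) gives $c_1(\alpha^{(j)})\geq 0$. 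Together these force $k=1$, $\mu(\beta^{(1)})=2$, and $c_1(\alpha^{(j)})=0$ for every $j$.

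Next, let $\Sigma_0^L\cong D^2$ denote the unique disk component and set $\varphi_0:=\varphi|_{\Sigma_0^L}$. By Theorem \ref{Cho-Oh}(1), the Maslov-index-$2$ class $\beta^{(1)}$ equals $\beta_i$ for some $i\in\{0,1,\ldots,m+1\}$. Writing $\alpha':=\sum_j\alpha^{(j)}\in H_2(X,\bZ)$, the gluing relation reads $\beta_i+\alpha'=\beta_0+\alpha$ in $\pi_2(X,L)$. Applying the boundary homomorphism $\partial:\pi_2(X,L)\to\pi_1(L)\cong N$, which annihilates sphere classes, produces $v_i=v_0$, so $i=0$ and $\alpha'=\alpha$. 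Setting $\Sigma_1:=\overline{\Sigma^L\setminus\Sigma_0^L}$ and $\varphi_1:=\varphi|_{\Sigma_1}$ then delivers the desired decomposition; $\Sigma_1$ is a genus-$0$ nodal curve (possibly with several connected components attached at distinct nodes of $\Sigma_0^L$) on which $\varphi$ represents $\alpha$.

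The main obstacle I expect is step (ii) above: in the merely semi-Fano setting there is no naive Chern-number bound forbidding additional disk bubbles, and one must lean on the full Cho--Oh classification of holomorphic disks (which is toric-specific) to see that every non-constant disk has Maslov index at least $2$. Once this squeeze has isolated a single basic disk component, the rest is formal bookkeeping on $H_2(X,\bZ)$ and $\pi_2(X,L)$ via the boundary map $\partial$.
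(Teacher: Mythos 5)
Your argument is correct and follows essentially the same route as the paper: decompose the domain into disk and sphere components, use the Cho--Oh classification together with the semi-Fano condition ($c_1\geq 0$ on effective classes) to force a single Maslov-index-two disk component, and then identify its class as $\beta_0$. The only cosmetic differences are that you phrase the exclusion of extra disk components via Maslov-index additivity where the paper invokes the equivalent fact (from the proof of Lemma \ref{lem3.1}) that a class $\alpha$ with $c_1(\alpha)=0$ cannot be a positive combination of the $\beta_i$'s, and that you spell out the $\partial$-argument pinning down $i=0$, which the paper leaves implicit in its appeal to Theorem \ref{Cho-Oh}(1).
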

\begin{proof}
The Maslov index of $\beta_0+\alpha$ is $\mu(\beta_0+\alpha)=2$ since $c_1(\alpha)=0$. By Theorem \ref{Cho-Oh}(1), there does not exist any nonconstant holomorphic map from a disk to $(X,L)$ with class $\beta_0+\alpha$, so $\Sigma^L$ must be singular. Decompose $\Sigma^L$ into irreducible components. Let $\varphi_j:(D^2,\partial D^2)\to(X,L)$ and $\varphi_k:\bP^1\to X$ be the restriction of $\varphi$ to the disk and sphere components respectively. Then $\beta_0+\alpha=\sum_j[\varphi_j]+\sum_k[\varphi_k]$. Notice that, by the proof of Lemma \ref{lem3.1}, any $\alpha\in H_2(X,\bZ)$ with $c_1(\alpha)=0$ cannot be expressed as a $\bZ$-linear combination of $\beta_i$'s with positive coefficients. Hence, there must be only one disk component in $\Sigma$. Therefore, we can decompose $\Sigma$ into $\Sigma^L_0\cup\Sigma_1$, where $\Sigma^L_0=D^2$ is a disk and $\Sigma_1$ is a genus 0 nodal curve (i.e. a tree of $\bP^1$'s). Now, the restriction $\varphi_0:=\varphi|_{\Sigma^L_0}$ is a nonconstant holomorphic map from $(D^2,\partial D^2)$ to $(X,L)$. By Theorem \ref{Cho-Oh}(1) again, the class of $\varphi_0$ must be $\beta_0$. Hence $\varphi_1:=\varphi|_{\Sigma_1}$ represents $\alpha$.
\end{proof}
\begin{prop}\label{prop4.3}
There exists a unique holomorphic map $\varphi_{m+1}:(D^2,\partial D^2)\to(X,L)$ representing the class $\beta_{m+1}$ such that its boundary $\partial\varphi_{m+1}:=\varphi_{m+1}|_{\partial D^2}$ is exactly given by $\partial\varphi_0:=\varphi_0|_{\partial D^2}$ with the opposite orientation, where $\varphi_0$ is the map obtained in Proposition \ref{prop4.2}
\end{prop}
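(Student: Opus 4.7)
The plan is to exploit the $\bP^1$-bundle structure $\pi : X \to Y$ and realize both $\varphi_0$ and $\varphi_{m+1}$ as holomorphic disks inside the single fiber $F := \pi^{-1}(\pi(p)) \cong \bP^1$. Under the restricted $T^n$-action, $F$ is itself a toric $\bP^1$ with two fixed points $F \cap D_0$ and $F \cap D_{m+1}$, while $L$ meets $F$ in the moment-map circle $L \cap F$, which is the $S^1$-factor of the product decomposition $L \cong L' \times S^1$ with $L' := \pi(L) \subset Y$ a Lagrangian torus fiber of $Y$.

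Next I would apply Theorem \ref{Cho-Oh} inside the toric subvariety $F$: this produces two Maslov-index-two holomorphic disks through $p$, one mapping onto each of the two hemispheres bounded by $L \cap F$, whose natural toric parameterizations (modeled by $z \mapsto z$ and $z \mapsto z^{-1}$ on $(\bP^1, S^1)$) induce opposite orientations on their common boundary. Composing each with the inclusion $F \hookrightarrow X$ yields two disks in $(X, L)$, and the intersection computation $D_i \cdot F = \delta_{i,0} + \delta_{i,m+1}$ forces one to represent $\beta_0$ and the other $\beta_{m+1}$. By the uniqueness part of Theorem \ref{Cho-Oh}(2), the $\beta_0$-disk must coincide with $\varphi_0$; I would then take $\varphi_{m+1}$ to be the other one, pre-composed with the unique rotation of $D^2$ that makes its boundary parameterization equal to $\partial\varphi_0$ run backwards.

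Uniqueness of $\varphi_{m+1}$ is then immediate: any other candidate of class $\beta_{m+1}$ with the same boundary values on $\partial D^2$ must agree with $\varphi_{m+1}$, since a holomorphic map from the disk is determined by its restriction to the boundary. The main technical point is to justify cleanly the opposite-orientation claim for the boundaries of the two hemisphere disks in $F$; I expect this to reduce to the explicit toric model on $(\bP^1, S^1)$, where the two maximal holomorphic disks bounding a fixed equatorial circle are forced to traverse it in opposite directions simply because both inherit the standard orientation of $D^2$.
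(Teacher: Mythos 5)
Your argument is essentially the paper's: both proofs realize $\varphi_0$ and $\varphi_{m+1}$ as the two hemispheres of the fiber $C_p=\pi^{-1}(\pi(p))\cong\bP^1$ through $p$ cut by the circle $C_p\cap L$, identify these with the Cho--Oh disks via the uniqueness in Theorem \ref{Cho-Oh}(2), and then fix the boundary parameterization by reparameterizing the domain. The one small slip is that the reparameterization needed to make $\partial\varphi_{m+1}$ equal to $\partial\varphi_0$ reversed is in general a M\"obius automorphism of $D^2$ (as the paper states), not merely a rotation, since $\varphi_0$ comes with an arbitrary parameterization; this is harmless because an automorphism of $D^2$ is determined by its boundary values, so the existence and uniqueness of the required automorphism — and hence of $\varphi_{m+1}$ — still follows at once.
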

\begin{proof}
Let $\varphi_{m+1}:(D^2,\partial D^2)\to(X,L)$ be a holomorphic map representing the class $\beta_{m+1}$ such that $p\in\varphi(\partial D^2)$. By Theorem \ref{Cho-Oh}(2), there exists one and only one such map up to automorphisms of $D^2$. Consider the moduli space $\overline{\mathcal{M}}_{0,1}(X,h)$ of genus 0 stable maps to $X$ with one marked point which represent the fiber class $h$. Since $X\to Y$ is a $\bP^1$-bundle, the evaluation map $ev:\overline{\mathcal{M}}_{0,1}(X,h)\to X$ is an isomorphism. Hence, there exists a unique (up to automorphisms of the domain) holomorphic map $\phi:\bP^1\to X$ representing the class $h$ which passes through $p\in L\subset X$. The image of this map is the fiber $C_p\cong\bP^1$ of $X\to Y$ which contains $p$. Now, the intersection $C_p\cap L\cong S^1$ splits the fiber $C_p$ into two disks. This gives two holomorphic maps $\varphi_0':(D^2,\partial D^2)\to(X,L)$ and $\varphi_{m+1}':(D^2,\partial D^2)\to(X,L)$ with classes $\beta_0$ and $\beta_{m+1}$ respectively. By Theorem \ref{Cho-Oh}(2), they must be the same as $\varphi_0$, $\varphi_{m+1}$ up to automorphisms of $D^2$. Hence, by composing $\varphi_{m+1}$ with an automorphism of $D^2$, which is uniquely determined by $\varphi_0$, we get the desired unique holomorphic map representing the class $\beta_{m+1}$.
\end{proof}
By Proposition \ref{prop4.3}, we can glue the maps $\varphi:(\Sigma^L,\partial\Sigma^L)\rightarrow(X,L)$ and $\varphi_{m+1}:(D^2,\partial D^2)\to(X,L)$ together to give a holomorphic map $\varphi':\Sigma\to X$ which represents the class $\beta_0+\beta_{m+1}+\alpha=h+\alpha$, where $\Sigma$ is the union of $\Sigma^L$ and $D^2$ with their boundaries identified in the obvious way. It is easy to see that this map is stable. Hence, $\sigma^X:=((\Sigma,z),\varphi')$ represents a point in $\mathcal{M}^X=\overline{\mathcal{M}}_{0,1}(X,h+\alpha)$ and we have $ev(\sigma)=p$. This defines a map
$$j:\mathcal{M}^L_p\to\mathcal{M}^X_p,\ [\sigma^L]\mapsto[\sigma^X].$$
$j$ is well-defined: Any automorphism of $\sigma^L=((\Sigma^L,z),\varphi)$ acts trivially on the component $\Sigma^L_0$ because $\varphi$ is nonconstant on this component. So any representative of $[\sigma^L]$ is mapped to the same isomorphism class in $\mathcal{M}^X_p$. We need to show that $j$ is bijective.

Let $\sigma^X=((\Sigma,z),\varphi)$ be representing a point in $\mathcal{M}^X_p$. This consists of a genus 0 nodal curve $\Sigma$ with a marked point $z\in\Sigma$ and a stable map $\varphi:\Sigma\to X$ representing the class $h+\alpha$ such that $\varphi(z)=p$. The following is an analog of Proposition \ref{prop4.2}.
\begin{prop}\label{prop4.4}
$\Sigma$ can be decomposed as $\Sigma=\Sigma_0\cup\Sigma_1$, where $\Sigma_0\cong\bP^1$ is irreducible, such that the restrictions $\varphi_0:=\varphi|_{\Sigma_0}$ and $\varphi_1:=\varphi|_{\Sigma_1}$ represent the classes $h$ and $\alpha$ respectively.
\end{prop}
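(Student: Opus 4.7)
Mimicking the proof of Proposition~\ref{prop4.2}, with the Cho--Oh rigidity of Maslov index two disks replaced by the rigidity for rational curves provided by Lemma~\ref{lem3.2}, I would first decompose $\Sigma=\bigcup_i\Sigma_i$ into its irreducible components, each isomorphic to $\bP^1$. By Lemma~\ref{lem3.1}, every effective class in $H_2(X,\bZ)$ is uniquely of the form $nh+\iota_{0*}\gamma$ with $n\in\bZ_{\geq 0}$ and $\gamma\in H_2^{\textrm{eff}}(Y,\bZ)$; write $[\varphi|_{\Sigma_i}]=n_ih+\iota_{0*}\gamma_i$ and $\alpha=\iota_{0*}\gamma$. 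Summing over $i$, the identity $h+\iota_{0*}\gamma=(\sum_i n_i)h+\iota_{0*}(\sum_i\gamma_i)$ together with the uniqueness of the decomposition gives $\sum_i n_i=1$ and $\sum_i\gamma_i=\gamma$. Hence exactly one component, call it $\Sigma_0$, has $n_0=1$, and I set $\Sigma_1:=\Sigma\setminus\Sigma_0$.

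The crux is to show $\gamma_0=0$, which immediately yields $[\varphi|_{\Sigma_0}]=h$ and $[\varphi|_{\Sigma_1}]=\alpha$. Assume for contradiction that $\gamma_0\neq 0$. Then Lemma~\ref{lem3.2}(1) applied to $\varphi|_{\Sigma_0}$ gives $\varphi(\Sigma_0)\subset D_j$ for some $0\leq j\leq m$, and for each $i\neq 0$ with $\gamma_i\neq 0$, Lemma~\ref{lem3.2}(2) gives $\varphi(\Sigma_i)\subset D_0$. Thus every non-constant irreducible component of $\Sigma$ is mapped into some toric prime divisor of $X$.

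I would now invoke the marked point condition $\varphi(z)=p\in L$. Since $L$ is a Lagrangian torus fiber of the moment map sitting over an interior point of the moment polytope, $L$ meets none of the toric prime divisors, so $p\notin\bigcup_i D_i$. Let $\Sigma_k$ be the component containing $z$; if $\varphi|_{\Sigma_k}$ is constant, then by stability $\Sigma_k$ carries at least three special points and thus attaches at a node to a neighboring component whose image under $\varphi$ still contains $p$. Iterating, and using that the total class $h+\alpha$ is nonzero so not every component can be contracted, one eventually arrives at a non-constant irreducible component of $\Sigma$ whose image contains $p$. That component is either $\Sigma_0$ or some $\Sigma_i$ with $i\neq 0$ and $\gamma_i\neq 0$, forcing $p$ to lie in $D_j$ or in $D_0$, which contradicts $p\notin\bigcup_i D_i$. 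Hence $\gamma_0=0$, giving the required decomposition.

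The main obstacle I anticipate is precisely this ``walk through the dual graph'' step: extracting a non-constant irreducible component of $\Sigma$ whose image passes through $p$ is a routine but slightly fiddly stability argument. Once that is set up, the rest of the proof is a bookkeeping exercise combining Lemmas~\ref{lem3.1} and~\ref{lem3.2} with the geometric fact that Lagrangian torus fibers avoid the toric boundary divisors.
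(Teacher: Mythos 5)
Your proof is correct and follows essentially the same route as the paper: isolate the unique irreducible component carrying the fiber class $h$ (using the splitting $H_2^{\textrm{eff}}(X,\bZ)\cong\bZ_{\geq0}h\oplus\iota_{0*}H_2^{\textrm{eff}}(Y,\bZ)$ from Lemma~\ref{lem3.1}), push all other nonconstant components into $D_0$ via Lemma~\ref{lem3.2}(2), and use the incidence condition $\varphi(z)=p\in L$ together with Lemma~\ref{lem3.2}(1) and the fact that $L$ misses the toric divisors to force $\gamma_0=0$. The only difference is cosmetic: you spell out the walk through the dual graph past contracted components, a step the paper leaves implicit when it asserts that the image of the $h$-component must meet $L$ at $p$.
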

\begin{proof}
By Lemma \ref{lem3.2}(1), there does not exist any nonconstant holomorphic map from $\bP^1$ to $X$ representing the class $h+\alpha$ whose image is not contained entirely in the toric divisors. Hence, $\Sigma$ must be singular. Decompose $\Sigma$ into components $\Sigma=\bigcup_a\Sigma_a$, where each $\Sigma_a\cong\bP^1$ is irreducible. Then we have
$$\sum_a[\varphi(\Sigma_a)]=h+\alpha.$$
Since $h$ is primitive, there exists $a_0$ such that $\varphi(\Sigma_{a_0})=h+\alpha'$ and $\sum_{a\neq a_0}[\varphi(\Sigma_a)]=\alpha"$ for some $\alpha',\alpha"\in\iota_{0*}H_2^\textrm{eff}(Y,\bZ)\subset H_2^\textrm{eff}(X,\bZ)$ with $\alpha=\alpha'+\alpha"$. By Lemma \ref{lem3.1}, we have $c_1(\alpha')=c_1(\alpha")=0$. Then, by Lemma \ref{lem3.2}(2), the images of $\bigcup_{a\neq a_0}\Sigma_a$ is contained entirely in the zero section $D_0$. So the image of $\Sigma_{a_0}$ must be intersecting with $L$ at $p$. Applying Lemma \ref{lem3.2}(1) again, we see that $\alpha'$ must be zero. The result follows.
\end{proof}
Note that $\varphi_0$ is a nonconstant holomorphic map from $\bP^1$ to $X$ whose image contains $p$. Arguing as in the proof of Proposition \ref{prop4.3}, we see that the image of $\varphi_0$ is the fiber $C_p$ of the $\bP^1$-bundle $X\to Y$ which contains $p$, and $\varphi_0(\bP^1)\cap L=S^1$. We can then split $\Sigma_0\cong\bP^1$ into two disks $\Sigma_0=\Sigma_0'\cup\Sigma_0"\cong D^2\cup D^2$, and split $\varphi_0$ into two holomorphic maps $\varphi_0':(\Sigma_0',\partial\Sigma_0')\to(X,L)$ and $\varphi_{m+1}':(\Sigma_0",\partial\Sigma_0")\to(X,L)$ which represent the classes $\beta_0$ and $\beta_{m+1}$ respectively. Now, let $\Sigma^L:=\Sigma_0'\cup\Sigma_1$ and $\varphi':=\varphi|_{\Sigma^L}$. Then $\varphi':(\Sigma^L,\partial\Sigma^L)\to(X,L)$ is a genus 0 bordered stable map such that $\varphi(\partial\Sigma^L)$ contains $p$, and $\sigma^L:=((\Sigma^L,z),\varphi')$ represents a point in $\mathcal{M}^L_p$. By our constructions, $j([\sigma^L])=[\sigma^X]$. This defines a map $j^{-1}:\mathcal{M}^X_p\to\mathcal{M}^L_p$. Again, since any automorphism of $\sigma^X=((\Sigma,z),\varphi)$ acts trivially on the component $\Sigma_0$, the map $j^{-1}$ is well-defined. It is obvious that this is the inverse map of $j$. Hence, $j$ is a bijective map.
\begin{prop}\label{prop4.5}
Under the bijective map $j:\mathcal{M}^L_p\to\mathcal{M}^X_p$, the Kuranishi structures on $\mathcal{M}^L_p$ and $\mathcal{M}^X_p$ can be naturally identified.
\end{prop}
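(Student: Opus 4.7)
The plan is to compare the local Kuranishi data around a matched pair $[\sigma^L]\in\mathcal{M}^L_p$ and $[\sigma^X]=j([\sigma^L])\in\mathcal{M}^X_p$ by exploiting the nodal decompositions $\Sigma^L=\Sigma^L_0\cup\Sigma_1$ and $\Sigma=\Sigma_0\cup\Sigma_1$ of Propositions \ref{prop4.2}--\ref{prop4.4}, together with the splitting $\Sigma_0=\Sigma^L_0\cup_\partial\Sigma''_0$ from Proposition \ref{prop4.3}. Following the gluing construction in Section 7 and Appendix A1.4 of \cite{FOOO06}, the Kuranishi neighborhood of each stable map is a fibered product, over the evaluation at the node, of local deformation spaces of the irreducible components and a smoothing parameter $D_\rho$, with obstruction bundle assembled from the contributions of the individual components.

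The first key observation will be that both $\Sigma_0$-factors are rigid and unobstructed once pinned at $p$. The disk $\varphi^L_0$ representing the Maslov index two class $\beta_0$ is unobstructed by the analysis of Cho--Oh, and $\varphi_0:\bP^1\to C_p$ is unobstructed because it is a degree-one map to a fiber of the $\bP^1$-bundle $X\to Y$. Hence the obstruction bundles on both sides reduce to the common bundle $E_1$ associated to $\varphi|_{\Sigma_1}$. Moreover, taking the fiber product with $\{p\}$ as in Definition \ref{fiber_product} cuts the disk factor $V^L_0$ down to a singleton by Theorem \ref{Cho-Oh}(2), and cuts the sphere factor $V^X_0$ down to a singleton via the isomorphism $ev:\overline{\mathcal{M}}_{0,1}(X,h)\xrightarrow{\sim}X$. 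The isotropy groups on both sides then agree with $\mathrm{Aut}(\sigma_1)$, since the restriction of the map to $\Sigma^L_0$ or to $\Sigma_0$ is an isomorphism onto its image and admits no nontrivial automorphism compatible with the marked point $z$ and the node.

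The remaining and most delicate task will be to identify the smoothing parameter $D_\rho$, the Kuranishi section, and the coordinate changes between overlapping charts canonically on the two sides. This rests on the fact that under Proposition \ref{prop4.3} the node on $\Sigma^L_0$ coincides with the corresponding node on $\Sigma_0$, with the same tangent line, so the plumbing data $T_{\mathrm{node}}\Sigma^L_0\otimes T_{\mathrm{node}}\Sigma_1$ and $T_{\mathrm{node}}\Sigma_0\otimes T_{\mathrm{node}}\Sigma_1$ are canonically identified. The Kuranishi maps, each built from $\bar\partial_{\varphi_1}$ and the resolvent at the node, then agree tautologically on the identified local model $V_1\times D_\rho$, and the homeomorphism $\psi$ is intertwined by $j$ by construction. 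The \emph{main obstacle} will be to check that this pointwise identification is uniform across $\mathcal{M}^L_p$ and that the compatibility conditions for coordinate changes, tangent bundles, and orientations in the sense of Section \ref{sec2} all descend to the common data $(V_1,D_\rho,E_1)$; granted this, Proposition \ref{prop4.5} follows.
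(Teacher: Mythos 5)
Your overall strategy---comparing the Fukaya--Ono / Fukaya--Oh--Ohta--Ono charts component by component, killing the obstruction on the $\Sigma_0$-piece (taking $E_0=0$ because the disk of class $\beta_0$ and the sphere of class $h$ are unobstructed), using the point constraint together with Theorem \ref{Cho-Oh}(2) and the isomorphism $ev:\overline{\mathcal{M}}_{0,1}(X,h)\to X$ to rigidify the $\Sigma_0$-factor, and identifying the isotropy groups with the automorphisms of the sphere part---is essentially the paper's. The gap is in your treatment of the node joining $\Sigma_0$ to the sphere tree $\Sigma_1$. You retain the smoothing parameter $D_\rho$ at this node and assert that the Kuranishi sections ``agree tautologically on the identified local model $V_1\times D_\rho$.'' They do not: for $\rho\neq 0$ the two sides pose the perturbed $\bar\partial$-equation on genuinely different domains---a bordered surface (the glued disk) on the $\mathcal{M}^L_p$ side versus a closed surface (the glued sphere, which must also absorb the cap $\Sigma_0''$) on the $\mathcal{M}^X_p$ side---and the corresponding solution families are produced by separate Newton iterations. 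Matching them would require a family version of the capping argument of Proposition \ref{prop4.3} for maps that are holomorphic only outside the support $W_1$ of the obstruction space, which is precisely the step you defer as ``the main obstacle.'' Since this is the heart of the proposition, the proposal as written is incomplete at its central point.

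The paper sidesteps this issue by construction: it excludes the node lying on $\Sigma_0$ from $V_{\mathrm{resolve}}$, so that every domain $\Sigma^L_\zeta$ (resp.\ $\Sigma_\zeta$) occurring in the chart still contains $\Sigma_0$ as an honest irreducible component. Then $E_0=0$ forces the restriction of $\varphi_{u,\zeta}$ to $\Sigma_0$ to be genuinely holomorphic of class $\beta_0$ (resp.\ $h$) and to pass through $p$, hence to coincide with the standard disk (resp.\ fiber), so $u_0=0$; the identification of $V_{\sigma^L}$ with $V_{\sigma^X}$, and of the maps $s$ and $\psi$, then reduces to the set-level gluing used to define $j$. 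If you insist on keeping $D_\rho$, you must either prove the family capping statement above or at least argue that the identification extends over $\rho\neq 0$; either way it is not tautological. The remaining ingredients of your outline---the reduction of the obstruction bundles to the common $E_1$, the rigidity of both $\Sigma_0$-factors through $p$, the identification of the isotropy groups, and the identification of the deformation parameters of the domain---are correct and agree with the paper's proof.
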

\begin{proof}
We shall first briefly recall the constructions of Kuranishi neighborhoods from \cite{FO99} and \cite{FOOO06}.

We begin with $\mathcal{M}^L_p$. Let $\sigma^L=((\Sigma^L,z),\varphi)$ be representing a point in $\mathcal{M}^L_p$. By Proposition \ref{prop4.2}, we can decompose $\Sigma^L$ into irreducible components $\Sigma^L=\Sigma_0\cup\Sigma_1\cup\ldots\cup\Sigma_k$, where $\Sigma_0=D^2$ is a disk and $\Sigma_1,\ldots,\Sigma_k$ are copies of $\bP^1$, such that the restrictions of $\varphi$ to $\Sigma_0$ and $\bigcup_{a=1}^k\Sigma_a$ represent the classes $\beta_0$ and $\alpha$ respectively.

For each $a=0,1,\ldots,k$, let $W^{1,p}(\Sigma_a;\varphi^*(TX);L)$ be the space of sections $v$ of $\varphi^*(TX)$ of $W^{1,p}$ class such that the restriction of $v$ to $\partial\Sigma_a$ lies in $\varphi^*(TL)$, and $W^{0,p}(\Sigma_a;\varphi^*(TX)\otimes\Lambda^{0,1})$ be the space of sections of $\varphi^*(TX)\otimes\Lambda^{0,1}$ of $W^{0,p}$ class. Note that $L$ does not play a role in the definition of $W^{1,p}(\Sigma_a;\varphi^*(TX);L)$ for $a=1,\ldots,k$. Then, let $W^{1,p}(\Sigma^L;\varphi^*(TX);L)$ be the subspace of $\bigoplus_{a=0}^kW^{1,p}(\Sigma_a;\varphi^*(TX);L)$ consisting of elements $\{u=(u_a)\in\bigoplus_{a=0}^kW^{1,p}(\Sigma_a;\varphi^*(TX);L)$ such that for any singular point $w\in\Sigma^L$ which is the intersection of two irreducible components $\Sigma_a$ and $\Sigma_b$, we have $u_a(w)=u_b(w)$. Also let $W^{0,p}(\Sigma^L;\varphi^*(TX)\otimes\Lambda^{0,1})=\bigoplus_{a=0}^k W^{0,p}(\Sigma_a;\varphi^*(TX)\otimes\Lambda^{0,1})$. Consider the linearization of the Cauchy-Riemann operator $\bar\partial$:
$$D_\varphi\bar\partial:W^{1,p}(\Sigma^L;\varphi^*(TX);L)\to W^{0,p}(\Sigma^L;\varphi^*(TX)\otimes\Lambda^{0,1}).$$
This is a Fredholm operator by ellipticity.

To construct the obstruction space, choose open subsets $W_a$ of $\Sigma_a$ whose closure is disjoint from the boundary of each of $\Sigma_a$ and from the singular and marked points. Then, for each $a=0,1,\ldots,k$, by the unique continuation theorem, we can choose a finite dimensional subset $E_a$ of $C^\infty_0(W_a;\varphi^*(TX))$ such that
$$\textrm{Im }D_\varphi\bar\partial+\bigoplus_{a=0}^kE_a=W^{0,p}(\Sigma^L;\varphi^*(TX)\otimes\Lambda^{0,1}).$$
We also choose $\bigoplus_{a=0}^kE_a$ to be invariant under the group $\Gamma_{\sigma^L}$ of automorphisms of $\sigma^L$. We set $E_{\sigma^L}=\bigoplus_{a=0}^kE_a$.

Let $\Pi:W^{0,p}(\Sigma^L;\varphi^*(TX)\otimes\Lambda^{0,1})\to W^{0,p}(\Sigma^L;\varphi^*(TX)\otimes\Lambda^{0,1})/E_{\sigma^L}$ be the projection map. Let $V_{\textrm{map},\sigma^L}$ be the kernel of the operator $\Pi\circ D_\varphi\bar\partial$. Now, consider the automorphism group $\textrm{Aut}(\Sigma^L,z)$ of the marked bordered Riemann surface $(\Sigma^L,z)$. The group $\textrm{Aut}(\Sigma^L,z)$ may not be finite since some components may be unstable. However, we can naturally embed the Lie algebra $\textrm{Lie(Aut}(\Sigma^L,z))$ into $V_{\textrm{map},\sigma^L}$. Take its $L^2$ orthogonal complement (with respect to a certain metric). Then let $V_{\textrm{map},\sigma^L}'$ be a small neighborhood of the zero of it.

On the other hand, let $V_{\textrm{deform},\sigma^L}$ be a small neighborhood of the origin in the space of first order deformations of the stable components of $(\Sigma^L,z)$. Also let $V_{\textrm{resolve},\sigma^L}$ be a small neighborhood of the origin in the space $\bigoplus_{w}T_w\Sigma_a\otimes T_w\Sigma_b$, where the sum is over singular points $w\in\Sigma^L\setminus\Sigma_0$ and $\Sigma_a$, $\Sigma_b$ are the two components such that $\Sigma_a\cap\Sigma_b=\{w\}$. There is a family of marked semi-stable bordered Riemann surfaces $\{(\Sigma^L_\zeta,z):\zeta\in V_{\textrm{deform},\sigma^L}\times V_{\textrm{resolve},\sigma^L}\}$ over the product $V_{\textrm{deform},\sigma^L}\times V_{\textrm{resolve},\sigma^L}$. We remark that, since we do not deform the singular point in $\Sigma_0$, each $\Sigma^L_\zeta$ is singular and can be decomposed as $\Sigma^L_\zeta=\Sigma_0\cup\Sigma_\zeta'$.

Let $V_{\sigma^L}'=V_{\textrm{map},\sigma^L}'\times V_{\textrm{deform},\sigma^L}\times V_{\textrm{resolve},\sigma^L}$. By the proof of Proposition 12.23 in \cite{FO99}, there exist a $\Gamma_{\sigma^L}$-equivariant smooth map $s_{\sigma^L}:V_{\sigma^L}'\to E_{\sigma^L}$ and a family of smooth maps $\varphi_{u,\zeta}:(\Sigma^L_\zeta,\partial\Sigma^L_\zeta)\to(X,L)$ for $(u,\zeta)\in V_{\sigma^L}'$ such that $\bar\partial\varphi_{u,\zeta}=s_{\sigma^L}(u,\zeta)$. Now we set $V_{\sigma^L}=\{(u,\zeta)\in V_{\sigma^L}':\varphi_{u,\zeta}(z)=p\}$. By abuse of notations, denote the restriction of $s_{\sigma^L}$ to $V_{\sigma^L}$ also by $s_{\sigma^L}$. Then by \cite{FO99}, there is a map $\psi_{\sigma^L}$ mapping $s_{\sigma^L}^{-1}(0)/\Gamma_{\sigma^L}$ onto a neighborhood of $[\sigma^L]$ in $\mathcal{M}^L_p$. This finishes the review of the construction of a Kuranishi neighborhood $(V_{\sigma^L},E_{\sigma^L},\Gamma_{\sigma^L},s_{\sigma^L},\psi_{\sigma^L})$ of $[\sigma^L]\in\mathcal{M}^L_p$.

For a point in $\mathcal{M}^X_p$ represented by $\sigma^X=((\Sigma,z),\varphi)$, using Proposition \ref{prop4.4}, we decompose $\Sigma$ into irreducible components $\Sigma=\Sigma_0\cup\Sigma_1\cup\ldots\cup\Sigma_k$, where $\Sigma_0,\Sigma_1,\ldots,\Sigma_k$ are all copies of $\bP^1$, such that the restrictions of $\varphi$ to $\Sigma_0$ and $\bigcup_{a=1}^k\Sigma_a$ represent the classes $h$ and $\alpha$ respectively. The construction of a Kuranishi neighborhood $(V_{\sigma^X},E_{\sigma^X},\Gamma_{\sigma^X},s_{\sigma^X},\psi_{\sigma^X})$ of $[\sigma^X]\in\mathcal{M}^X_p$ is more or less the same as above, except that $W^{1,p}(\Sigma_0;\varphi^*(TX);L)$ is replaced by the space $W^{1,p}(\Sigma_0;\varphi^*(TX))$ of sections $v$ of $\varphi^*(TX)$ of class $W^{1,p}$.

We can now go back to the proof of the proposition.

Let $[\sigma^L]\in\mathcal{M}^L_p$, $[\sigma^X]\in\mathcal{M}^X_p$ be such that $j([\sigma^L])=[\sigma^X]$. First of all, it is obvious that the automorphism groups $\Gamma_{\sigma^L}$ and $\Gamma_{\sigma^X}$ are the same. Next, since the moduli space of maps from $(D^2,\partial D^2)$ to $(X,L)$ with class $\beta_0$ is unobstructed, we can choose $E_0=0$ for the obstruction space $E_{\sigma^L}$. Similarly, since the moduli space of maps from $\bP^1$ to $X$ with class $h$ is unobstructed, we can also choose $E_0=0$ for the obstruction space $E_{\sigma^X}$. Hence, the obstruction spaces $E_{\sigma^L}$ and $E_{\sigma^X}$ are both of the form $0\oplus E_1\oplus\ldots\oplus E_k$ and can be identified naturally.

We can identify $V_{\textrm{deform},\sigma^L}$ with $V_{\textrm{deform},\sigma^X}$ since the component $\Sigma_0$ in $\Sigma^L$ has no nontrivial deformations and the component $\Sigma_0$ in $\Sigma$ is unstable. It is also clear that we can identify $V_{\textrm{resolve},\sigma^L}$ with $ V_{\textrm{resolve},\sigma^X}$. Now, let $(u=(u_0,u_1,\ldots,u_k),\zeta)\in V_{\sigma^L}$. Because $E_0=0$, we have $D_\varphi\bar\partial u_0=0$. From the construction of the family of smooth maps $\varphi_{u,\zeta}:(\Sigma^L_\zeta,\partial\Sigma^L_\zeta)\to(X,L)$, it follows that the restriction of $\varphi_{u,\zeta}$ to the component $\Sigma_0$ is a holomorphic map with class $\beta_0$. We also have $\varphi_{u,\zeta}(z)=p$. But there is a unique (up to automorphisms of the domain) holomorphic map from $(D^2,\partial D^2)$ to $(X,L)$ with class $\beta_0$ whose boundary passes through $p$, which is given by $\varphi|_{\Sigma_0}$. So we must have $u_0=0$. By a similar argument, all $(u,\zeta)\in V_{\sigma^X}$ also have $u_0=0$. Therefore, we can naturally identify $V_{\sigma^L}$ and $V_{\sigma^X}$.

Finally, we can identify the families of maps $\{\varphi_{u,\zeta}:(\Sigma^L_\zeta,\partial\Sigma^L_\zeta)\to(X,L):(u,\zeta)\in V_{\sigma^L}\}$ with $\{\varphi_{u,\zeta}:\Sigma_\zeta\to X:(u,\zeta)\in V_{\sigma^X}\}$ by the gluing construction that we used in the definition of the map $j$. Hence, the maps $s_{\sigma^L}$ and $\psi_{\sigma^L}$ can also be naturally identified with the maps $s_{\sigma^X}$ and $\psi_{\sigma^X}$ respectively.

This completes the proof of the proposition.
\end{proof}
Theorem \ref{main_thm} now follows from Propositions \ref{prop4.1} and \ref{prop4.5}.

\section{Applications to mirror symmetry}\label{sec5}

In this section, we apply Theorem \ref{main_thm} to study mirror symmetry for the toric manifolds $X=\bP(K_Y\oplus\mathcal{O}_Y)$. We shall first briefly review the constructions of the mirrors for toric manifolds, following Cho-Oh \cite{CO03}, Auroux \cite{A07, A09}, Fukaya-Oh-Ohta-Ono \cite{FOOO08a, FOOO08b} and Chan-Leung \cite{CL08a, CL08b}.

As usual, $N\cong\bZ^n$ is a rank $n$ lattice, $M=\textrm{Hom}(N,\bZ)$ is the dual lattice and $\langle\cdot,\cdot\rangle:M\times N\to\bZ$ is the dual pairing. Also let $N_\bR=N\otimes_\bZ\bR$, $M_\bR=M\otimes_\bZ\bR$, and denote by $T_N$ and $T_M$ the real tori $N_\bR/N$ and $M_\bR/M$ respectively.

Let $X=X_\Delta$ be an $n$-dimensional smooth projective toric variety defined by a fan $\Delta$ in $N_\bR$. Let
$v_1,\ldots,v_d$ be the primitive generators of the 1-dimensional cones in $\Delta$. We equip $X$ with a toric K\"{a}hler structure $\omega$. Let $P$ be the corresponding moment polytope and $\mu:X\to P$ be the moment map. $P$ is defined by a set of inequalities
$$P=\{x\in M_\bR|\langle x,v_i\rangle\geq\lambda_i\textrm{ for }i=1,\ldots,d\},$$
for some $\lambda_1,\ldots,\lambda_d\in\bR$. For $i=1,\ldots,d$, we let $l_i:M_\bR\rightarrow\bR$ be the affine linear function defined by $l_i(x)=\langle x,v_i\rangle-\lambda_i$.

We are interested in the mirror symmetry for the K\"{a}hler manifold $X$, equipped with the toric K\"{a}hler structure $\omega$ and the nowhere zero meromorphic $n$-form $\Omega=d\log w_1\wedge\ldots\wedge d\log w_n$, where $w_1,\ldots,w_n$ are the standard complex coordinates on the open dense orbit $U=N\otimes_\bZ\bC^*\cong(\bC^*)^n\subset X$. From the point of view of Auroux \cite{A07}, we are looking at the mirror symmetry for $X$ relative to the toric divisor $D_\infty=\bigcup_{i=1}^d D_i=X\setminus U$. As before, $D_i$ is the toric prime divisor in $X$ corresponding to $v_i$. The mirror geometry is given by a Landau-Ginzburg model $(X^\vee,W)$ consisting of a bounded domain $X^\vee\subset(\bC^*)^n$ and a holomorphic function $W:X^\vee\rightarrow\bC$ called the mirror superpotential.

As discussed in Auroux \cite{A07} and Chan-Leung \cite{CL08a, CL08b}, the mirror manifold $X^\vee$ can be obtained by dualizing Lagrangian torus fibrations (so-called T-duality) as follows. Restricting the moment map $\mu:X\to P$ to the open dense orbit $U\subset X$ gives a torus bundle $\mu:U\to\textrm{Int}(P)$, where $\textrm{Int}(P)$ denotes the interior of the polytope $P$. In fact this bundle is trivial, so we have $U=\textrm{Int}(P)\times\sqrt{-1}T_N$. The mirror manifold $X^\vee$ is given by the total space of the dual torus bundle, i.e.
$$X^\vee=\textrm{Int}(P)\times\sqrt{-1}T_N^\vee=\textrm{Int}(P)\times\sqrt{-1}T_M.$$
$X^\vee$ comes with a natural K\"{a}hler structure. In particular, as a complex manifold, $X^\vee$ is biholomorphic to a bounded domain in $(\bC^*)^n=M_\bR\times\sqrt{-1}T_M$. If $y=(y_1,\ldots,y_n)\in(\bR/2\pi\bZ)^n$ are the fiber coordinates on $T_M$ and the complex coordinates on $(\bC^*)^n$ are given by $z_j=\exp(-x_j-\sqrt{-1}y_j),\ j=1,\ldots,n$, where $x=(x_1,\ldots,x_n)\in\textrm{Int}(P)$, then $X^\vee\subset(\bC^*)^n$ can be written as
$$X^\vee=\{(z_1,\ldots,z_n)\in(\bC^*)^n:|e^{\lambda_i}z_i|<1,\ i=1,\ldots,d\}.$$

Geometrically, $X^\vee$ should be viewed as the moduli space of pairs $(L,\nabla)$ consisting of a (special) Lagrangian torus fiber of the moment map $\mu:X\rightarrow P$ together with a flat $U(1)$-connection $\nabla$ on the trivial line bundle $\underline{\bC}$ over $L$. More precisely, to a point $z=(z_1=\exp(x_1+\sqrt{-1}y_1),\ldots,z_1=\exp(x_1+\sqrt{-1}y_1))\in X^\vee$, we associate the flat $U(1)$-connection $\nabla_y=d+\frac{\sqrt{-1}}{2}\sum_{j=1}^n y_j du_j$ on the trivial line bundle $\underline{\bC}$ over the Lagrangian torus $L_x=\mu^{-1}(x)\cong T_N$, where $u=(u_1,\ldots,u_n)\in(\bR/2\pi\bZ)^n$ are the fiber coordinates on $T_N$. This picture is motivated by the SYZ conjecture for mirror Calabi-Yau manifolds proposed by Strominger, Yau and Zaslow \cite{SYZ96} in 1996.

On the other hand, it turns out that the mirror superpotential $W:X^\vee\to\bC$ acts as the mirror of the obstruction $\mathfrak{m}_0$ to the Floer homology of Lagrangian torus fibers in $X$.\footnote{In their works \cite{FOOO06, FOOO08a, FOOO08b}, Fukaya, Oh, Ohta and Ono call $W$ the potential function and they define it over the Novikov ring $\Lambda_0$ instead of $\bC$.} As shown in \cite{FOOO06}, $\mathfrak{m}_0$ comes from the virtual counting of Maslov index two holomorphic disks in $X$ with boundary in the Lagrangian torus fibers $L$. This leads to the following expression for $W$: For $\beta\in\pi_2(X,L)$, we define a holomorphic function $Z_\beta$ on $X^\vee$ by
$$Z_\beta(L,\nabla)=\exp\Big(-\frac{1}{2\pi}\int_\beta\omega\Big)\textrm{hol}_\nabla(\partial\beta).$$
Then the mirror superpotential $W:X^\vee\rightarrow\bC$ is given by the following holomorphic function
\begin{equation}\label{superpotential}
W(L,\nabla)=\sum_{\beta\in\pi_2(X,L),\ \mu(\beta)=2}c_\beta Z_\beta(L,\nabla),
\end{equation}
assuming that the sum converges. See Cho-Oh \cite{CO03}, Auroux \cite{A07, A09} and Fukaya-Oh-Ohta-Ono \cite{FOOO08a, FOOO08b} for more details.

For $i=1,\ldots,d$, let $\beta_i\in\pi_2(X,L)$ be the relative homotopy class such that $D_j\cdot\beta_i=\delta_{ij}$. Then, by the symplectic area formula of Cho and Oh (Theorem 8.1 in \cite{CO03}), we have
$$\int_{\beta_i}\omega=2\pi l_i(x)=2\pi(\langle x,v_i\rangle-\lambda_i),$$
where $x\in\textrm{Int}(P)$ is the image of $L$ under the moment map (i.e. $L=\mu^{-1}(x)$). Hence, for the basic classes $\beta_i$, the function $Z_{\beta_i}$ is given in local coordinates by
$$Z_{\beta_i}(L_x,\nabla_y)=\exp(-l_i(x))\exp(-\sqrt{-1}\langle y,v_i\rangle)=e^{\lambda_i}z^{v_i},$$
where $z^v$ denotes the monomial $z_1^{v^1}\cdots z_n^{v^n}$.

Furthermore, by Theorem \ref{Cho-Oh}, we have $c_{\beta_i}=1$ for $i=1,\ldots,d$. In particular, when $X$ is Fano (i.e. the anticanonical bundle $K_X^{-1}$ is ample), $\beta_1,\ldots,\beta_d\in\pi_2(X,L)$ are the only Maslov index two classes. Hence, the mirror superpotential is given explicitly by $$W=Z_{\beta_1}+\ldots+Z_{\beta_d}=e^{\lambda_1}z^{v_1}+\ldots+e^{\lambda_d}z^{v_d}.$$
However, in the non-Fano cases, the invariants $c_\beta$ and hence $W$ are in general very hard to compute. The only non-Fano examples whose mirror superpotentials are explicitly computed are the Hirzebruch surfaces $F_2$ and $F_3$, first computed by Auroux in \cite{A09}. Later, Fukaya, Oh, Ohta and Ono gave a different proof for the $F_2$ case in \cite{FOOO10a}.

Let's go back to our toric manifolds $X=\bP(K_Y\oplus\mathcal{O}_Y)$. We want to compute their mirror superpotentials using Theorem \ref{main_thm}.
\begin{lem}\label{lem5.1}
If $\beta\in\pi_2(X,L)$ is a Maslov index two class with $c_\beta\neq0$, then $\beta$ must either be one of $\beta_1,\ldots,\beta_m,\beta_{m+1}$ or of the form $\beta_0+\alpha$ for some effective class $\alpha\in H_2(X,\bZ)$ with $c_1(\alpha)=0$.
\end{lem}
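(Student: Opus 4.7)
The plan is to apply a standard bubble-tree decomposition to any stable map in $\overline{\mathcal{M}}_1(L,\beta)$ and then use positivity of intersection with the zero-section divisor $D_0$ to cut the possibilities down to the two cases in the statement. Let $\beta$ satisfy $c_\beta\neq 0$, so $\overline{\mathcal{M}}_1(L,\beta)\neq\emptyset$, and pick a stable representative $\varphi:\Sigma\to X$. Each nonconstant disk component is a holomorphic map and has Maslov index at least two (since $\mu=2\sum_j D_j\cdot(\cdot)$ and a nonconstant holomorphic disk has nonnegative intersection with every toric prime divisor), with equality forcing the class to be one of $\beta_0,\ldots,\beta_{m+1}$ by Theorem \ref{Cho-Oh}(1); each sphere component has nonnegative Chern number because $X$ is semi-Fano (Lemma \ref{lem3.1}). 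As these contributions sum to $\mu(\beta)=2$, there is exactly one nonconstant disk bubble, in some class $\beta_i$, and the remaining sphere classes sum to an effective class $\alpha\in H_2(X,\bZ)$ with $c_1(\alpha)=0$. Thus $\beta=\beta_i+\alpha$ for some $i\in\{0,1,\ldots,m,m+1\}$.

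Next, I would apply Lemma \ref{lem3.1} to pin down $\alpha$: the decomposition $H_2^{\textrm{eff}}(X,\bZ)\cong\bZ_{\geq 0}h\oplus\iota_{0*}H_2^{\textrm{eff}}(Y,\bZ)$ together with $c_1(h)=2$ forces $\alpha\in\iota_{0*}H_2^{\textrm{eff}}(Y,\bZ)$. Applying Lemma \ref{lem3.1} componentwise, every irreducible sphere component of $\varphi$ also represents a class in $\iota_{0*}H_2^{\textrm{eff}}(Y,\bZ)$, so by Lemma \ref{lem3.2}(2) each such sphere has image contained in the zero-section divisor $D_0=\iota_0(Y)$.

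The key step is to rule out the case $i\neq 0$ with $\alpha\neq 0$. For $i\in\{1,\ldots,m,m+1\}$ one has $D_0\cdot\beta_i=\delta_{i0}=0$. The disk bubble is a nonconstant holomorphic map representing $\beta_i$ with boundary on $L\subset X\setminus D_0$, so its image is not contained in the smooth complex hypersurface $D_0$; positivity of intersection then forces the disk image to be disjoint from $D_0$. If $\alpha\neq 0$, the sphere bubble tree, which lies entirely inside $D_0$, has to attach to the disk at a node, and the image of that node would have to lie simultaneously in $D_0$ and in the disk image, a contradiction. Hence $\overline{\mathcal{M}}_1(L,\beta_i+\alpha)=\emptyset$ and $c_{\beta_i+\alpha}=0$ whenever $i\neq 0$ and $\alpha\neq 0$. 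The remaining possibilities, namely $\beta=\beta_i$ with $i\in\{1,\ldots,m,m+1\}$ or $\beta=\beta_0+\alpha$ with $c_1(\alpha)=0$, are exactly those asserted.

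The main obstacle I expect is the bubble-tree bookkeeping in the first step: one has to verify carefully that the Maslov indices of disk components and twice the Chern numbers of sphere components really add up to $\mu(\beta)$, that any constant components contribute zero and may be discarded, and that the unique nonconstant disk component is itself a genuine holomorphic disk to which Theorem \ref{Cho-Oh}(1) applies. Once this decomposition is in hand, Lemmas \ref{lem3.1}, \ref{lem3.2}(2) and positivity of intersection do the rest.
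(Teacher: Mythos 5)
Your proposal is correct and follows essentially the same route as the paper: the semi-Fano condition plus the Cho--Oh classification reduces $\beta$ to the form $\beta_i+\alpha$ with $c_1(\alpha)=0$ and a stable representative splitting into a single disk of class $\beta_i$ and a sphere tree of class $\alpha$ (the paper gets this decomposition by citing the proof of Proposition \ref{prop4.2}), and then Lemma \ref{lem3.2}(2) places the sphere tree inside $D_0$, so connectedness together with $D_0\cdot\beta_i=\delta_{0i}$ and positivity of intersection forces $i=0$ whenever $\alpha\neq0$. Your write-up just makes explicit the Maslov-index bookkeeping and the positivity argument that the paper leaves implicit.
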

\begin{proof}
First of all, since $X$ is semi-Fano, $c_1(\alpha)\geq0$ for any effective class $\alpha\in H_2(X,\bZ)$. Hence, if $\beta\in\pi_2(X,L)$ is a Maslov index two class, then it must be of the form $\beta_i+\alpha$ for some $i=0,1,\ldots,m,m+1$ and some effective class $\alpha\in H_2(X,\bZ)$ with $c_1(\alpha)=0$. Let $\varphi:(\Sigma^L,\partial\Sigma^L)\to(X,L)$ be a stable map from a genus 0 bordered Riemann surface $(\Sigma^L,\partial\Sigma^L)$ to $(X,L)$ representing the class $\beta_i+\alpha$. Suppose that $\alpha\neq0$. Then, by the proof of Proposition \ref{prop4.2}, we can decompose $\Sigma^L$ into $\Sigma^L_0\cup\Sigma_1$, where $\Sigma^L_0=D^2$ is a disk and $\Sigma_1$ is a genus 0 nodal curve, such that the restrictions $\varphi_0:=\varphi|_{\Sigma^L_0}$ and $\varphi_1:=\varphi|_{\Sigma_1}$ represent the classes $\beta_i$ and $\alpha$ respectively. However, by Lemma \ref{lem3.2}(2), the image of $\varphi_1$ must be contained entirely in the toric prime divisor $D_0$. Since $\varphi(\Sigma^L_0)\cdot D_0=\delta_{0i}$ and the domain of $\varphi$ is connected, we must have $i=0$. Hence $c_{\beta_i+\alpha}=0$ unless $i=0$ or $\alpha=0$.
\end{proof}
\begin{thm}\label{thm5.1}
For the $\bP^1$-bundle $X=\bP(K_Y\oplus\mathcal{O}_Y)$ over a toric Fano manifold $Y$, the mirror superpotential $W:X^\vee\rightarrow\bC$ is given by
$$W=CZ_{\beta_0}+Z_{\beta_1}+\ldots+Z_{\beta_m}+Z_{\beta_{m+1}},$$
where
$$C=1+\sum_{\substack{\alpha\in H_2^\textrm{eff}(X,\bZ),\\ \alpha\neq0,\ c_1(\alpha)=0}} \textrm{GW}^{X,h+\alpha}_{0,1}(\textrm{[pt]})q^\alpha,$$
and $q^\alpha=\exp(-\frac{1}{2\pi}\int_\alpha\omega)$.
\end{thm}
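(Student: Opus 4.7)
The plan is to start from the general expression (\ref{superpotential}) for the mirror superpotential, $W = \sum_{\mu(\beta)=2} c_\beta Z_\beta$, and to whittle down the sum using the structural results already established. By Lemma~\ref{lem5.1}, only the basic classes $\beta_1,\ldots,\beta_{m+1}$ and the classes of the form $\beta_0+\alpha$ (with $\alpha\in H_2^{\textrm{eff}}(X,\bZ)$, $c_1(\alpha)=0$, allowing $\alpha=0$) can contribute nontrivially. So the sum in (\ref{superpotential}) immediately splits into a \emph{Fano part} $\sum_{i=1}^{m+1} c_{\beta_i} Z_{\beta_i}$ and a \emph{quantum-corrected part} $\sum_{\alpha} c_{\beta_0+\alpha} Z_{\beta_0+\alpha}$ running over effective $\alpha$ with $c_1(\alpha)=0$.

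For the Fano part I will invoke Theorem~\ref{Cho-Oh}(2): since the evaluation map $\overline{\mathcal{M}}_1(L,\beta_i)\to L$ is an orientation-preserving diffeomorphism, $c_{\beta_i}=1$ for $i=1,\ldots,m+1$, which accounts for the last $m+1$ summands in the desired formula. For the quantum-corrected part the central input is Theorem~\ref{main_thm}, which gives $c_{\beta_0+\alpha}=\textrm{GW}^{X,h+\alpha}_{0,1}(\textrm{[pt]})$ for every effective $\alpha$ with $c_1(\alpha)=0$. In particular, the $\alpha=0$ term needs $\textrm{GW}^{X,h}_{0,1}(\textrm{[pt]})=1$, which is clear since $\overline{\mathcal{M}}_{0,1}(X,h)\xrightarrow{ev} X$ is an isomorphism (as already used in the proof of Proposition~\ref{prop4.3}), and this matches the Cho--Oh computation $c_{\beta_0}=1$ as a consistency check.

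The last ingredient is a multiplicativity property of the functions $Z_\beta$. Because $\alpha\in H_2(X,\bZ)$ has no boundary, $\partial(\beta_0+\alpha)=\partial\beta_0$, so the holonomy factor is the same, while areas add: $\int_{\beta_0+\alpha}\omega=\int_{\beta_0}\omega+\int_\alpha\omega$. Therefore
$$Z_{\beta_0+\alpha}=\exp\!\Big(-\tfrac{1}{2\pi}\!\int_\alpha\omega\Big)\,Z_{\beta_0}=q^\alpha\,Z_{\beta_0}.$$
Substituting and factoring $Z_{\beta_0}$ out of the quantum-corrected part produces exactly $C\,Z_{\beta_0}$ with $C$ as stated, and combining with the Fano part yields the theorem.

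There is no genuinely hard step: the main theorem does all the heavy lifting. The only points requiring a moment of care are (i) justifying the convergence/summability implicit in rearranging the sum and factoring out $Z_{\beta_0}$, which is handled as in \cite{FOOO08a} by working formally in the Novikov ring and then restricting attention to the convergent regime in $X^\vee$, and (ii) the bookkeeping that ensures the $\alpha=0$ contribution produces precisely the constant term $1$ in $C$ rather than being double-counted.
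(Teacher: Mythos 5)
Your argument is correct and is exactly the paper's proof: the paper simply cites formula (\ref{superpotential}), Lemma \ref{lem5.1} and Theorem \ref{main_thm}, and your write-up fills in the same steps (splitting off the Fano part with $c_{\beta_i}=1$ from Theorem \ref{Cho-Oh}, substituting $c_{\beta_0+\alpha}=\textrm{GW}^{X,h+\alpha}_{0,1}(\textrm{[pt]})$, and using $Z_{\beta_0+\alpha}=q^\alpha Z_{\beta_0}$ to factor out $Z_{\beta_0}$). Your explicit handling of the $\alpha=0$ term via $c_{\beta_0}=1$ is a welcome detail the paper leaves implicit.
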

\begin{proof}
This is a consequence of formula (\ref{superpotential}), Lemma \ref{lem5.1} and Theorem \ref{main_thm}.
\end{proof}

\noindent\textbf{Example: The Hirzebruch surface $\mathbb{F}_2$.} Consider $X=\mathbb{F}_2=\bP(K_{\bP^1}\oplus\mathcal{O}_{\bP^1})$. We choose the primitive generators of the 1-dimensional cones in the fan $\Delta$ defining $\mathbb{F}_2$ to be\footnote{Note that this choice of generators is different from the one in Section \ref{sec3}. This does not alter any of our results. We make this choice just to make our notations consistent with those in Auroux \cite{A09} and Fukaya-Oh-Ohta-Ono \cite{FOOO10a}.}
$$v_0=(0,-1),v_1=(1,0),v_2=(-1,-2),v_3=(0,1)$$
in $N=\bZ^2$. We equip $\mathbb{F}_2$ with a toric K\"{a}hler structure so that moment polytope $P$ is given by
$$P=\{(x_1,x_2)\in\bR^2|x_1\geq0, x_2\geq0, x_2\leq t_2, x_1+2x_2\leq t_1+2t_2\},$$
where $t_1,t_2>0$. See Figure \ref{fig1} below.

\begin{figure}[ht]
\setlength{\unitlength}{1mm}
\begin{picture}(100,35)
\put(15,17){\vector(1,0){15}} \put(31,16){$v_1$} \put(15,17){\vector(0,1){15}} \put(14,33){$v_3$} \put(15,17){\vector(0,-1){15}} \put(14,-0.5){$v_0$} \put(15,17){\vector(-1,-2){8}} \put(5,-1.5){$v_2$} \put(50,5){\vector(0,1){25}} \put(49,31){$x_2$} \put(50,5){\vector(1,0){48}} \put(99,4.5){$x_1$} \curve(50,20, 60,20) \curve(60,20, 90,5) \curve(50.8,20, 49.2,20) \curve(90,4.2, 90,5.8) \put(48,3){0} \put(46,19){$t_2$} \put(85.5,1){$t_1+2t_2$} \put(45.5,11.5){$D_1$} \put(65,1.5){$D_3$} \put(52,21){$D_0$} \put(73,14){$D_2$}
\end{picture}
\caption{The fan $\Delta$ defining $\mathbb{F}_2$ (left) and its moment polytope $P$ (right).}\label{fig1}
\end{figure}
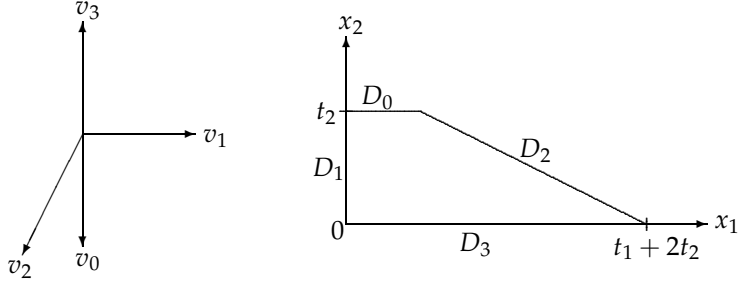

The effective cone $H_2^\textrm{eff}(\mathbb{F}_2,\bZ)$ is generated by two primitive relations
$$v_0+v_3=0,\ v_1+v_2-2v_0=0.$$
Let $h:=(1,0,0,1),\alpha:=(-2,1,1,0)\in H_2^\textrm{eff}(\mathbb{F}_2,\bZ)$ be the corresponding homology classes, which represent the fiber and the base of $\mathbb{F}_2$ respectively. Then
$$t_1=\int_\alpha\omega_X,\ t_2=\int_h\omega_X.$$
Let $q_i=\exp(-t_i)$ for $i=1,2$. We also have $c_1(h)=2$ and $c_1(\alpha)=0$.

Now, the mirror manifold $X^\vee$ is a bounded domain in $(\bC^*)^2$. By Theorem \ref{thm5.1}, the mirror superpotential $W:X^\vee\rightarrow\bC$ is given by
$$W=CZ_{\beta_0}+Z_{\beta_1}+Z_{\beta_2}+Z_{\beta_3}=C\frac{q_2}{z_2}+z_1+\frac{q_1q_2^2}{z_1z_2^2}+z_2,$$
where
$$C=\sum_{k=0}^\infty\textrm{GW}^{F_2,h+k\alpha}_{0,1}(\textrm{PD[pt]})q_1^k,$$
and $z_1,z_2$ are the standard coordinates on $(\bC^*)^2$. $\mathbb{F}_2$ is symplectomorphic to $\mathbb{F}_0=\bP^1\times\bP^1$ with induced isomorphism on degree-2 homology given by
\begin{eqnarray*}
H_2(\mathbb{F}_2,\bZ) & \rightarrow & H_2(\mathbb{F}_0,\bZ)\\
\alpha & \mapsto & l_1-l_2\\
h & \mapsto & l_2,
\end{eqnarray*}
where $l_1,l_2\in H_2(\mathbb{F}_0,\bZ)$ are the line classes in the two $\bP^1$ factors. Since Gromov-Witten invariants are symplectic invariants, the Gromov-Witten invariants of $\mathbb{F}_2$ are all equal to those of $\mathbb{F}_0$. So we have
\begin{eqnarray*}
\textrm{GW}^{\mathbb{F}_2,h+k\alpha}_{0,1}(\textrm{PD[pt]}) & = & \textrm{GW}^{\mathbb{F}_0,kl_1+(1-k)l_2}_{0,1}(\textrm{PD[pt]})\\
& = & \left\{\begin{array}{ll}
                1 & \textrm{if $k=0$ or $k=1$}\\
                0 & \textrm{otherwise.}
             \end{array}\right.
\end{eqnarray*}
Hence, $c_{\beta_0+k\alpha}=0$ for $k\geq2$ and $c_{\beta_0+\alpha}=c_{\beta_0}=1$. We conclude that $C=1+q_1$ and the mirror superpotential is given by
$$W=z_1+z_2+\frac{q_1q_2^2}{z_1z_2^2}+\frac{q_2+q_1q_2}{z_2}.$$
This agrees with the formula in Proposition 3.1 in Auroux \cite{A07}.\hfill $\square$\\

The formula in Theorem \ref{main_thm} has been applied to investigate mirror symmetry for various classes of toric manifolds. In \cite{LLW10a}, the formula was generalized and used to compute open Gromov-Witten invariants for toric Calabi-Yau 3-folds. In \cite{CLL10} and \cite{LLW10b}, the formula and its generalization in \cite{LLW10a} were used to obtain an enumerative meaning for the (inverse) mirror maps for toric Calabi-Yau 2- and 3-folds. In particular, this explains why we always get integral coefficients for the Taylor expansions of these mirror maps. In \cite{CL10}, the formula was used to compute mirror superpotentials for all semi-Fano toric surfaces.


\begin{thebibliography}{99}

\bibitem{A07}
D. Auroux, \textit{Mirror symmetry and T-duality in the complement of an anticanonical divisor}, J. Gokova Geom. Topol. GGT, 1 (2007), 51--91.

\bibitem{A09}
D. Auroux, \textit{Special Lagrangian fibrations, wall-crossing, and mirror symmetry}, Surveys in differential geometry. Vol. 13, 1--47, Int. Press, Somerville, MA, 2009.

\bibitem{B91}
V. Batyrev, \textit{On the classification of smooth projective toric varieties}, Tohoku Math. J. (2) 43 (1991), no. 4, 569--585.

\bibitem{CL10}
K. Chan and S.-C. Lau, \textit{Open Gromov-Witten invariants and superpotentials for semi-Fano toric surfaces}, \verb"arXiv:1010.5287".

\bibitem{CLL10}
K. Chan, S.-C. Lau and N.C. Leung, \textit{SYZ mirror symmetry for toric Calabi-Yau manifolds}, J. Differential Geom., to appear, \verb"arXiv:1006.3830".

\bibitem{CL08a}
K. Chan and N.C. Leung, \textit{Mirror symmetry for toric Fano manifolds via SYZ transformations}, Adv. Math. 223 (2010), no. 3, 797--839.

\bibitem{CL08b}
K. Chan and N.C. Leung, \textit{On SYZ mirror transformations}, ``New Developments in Algebraic Geometry, Integrable Systems and Mirror Symmetry" (RIMS, Kyoto, 2008), 1--30, Adv. Stud. Pure Math., 59, Math. Soc. Japan, Tokyo, 2010.

\bibitem{CO03}
C.-H. Cho and Y.-G. Oh, \textit{Floer cohomology and disc instantons of Lagrangian torus fibers in Fano toric manifolds}, Asian J. Math. 10 (2006), no. 4, 773--814.

\bibitem{FOOO06}
K. Fukaya, Y.-G. Oh, H. Ohta and K. Ono, \textit{Lagrangian intersection Floer theory: anomaly and obstruction}, AMS/IP Studies in Advanced Mathematics, 46. American Mathematical Society, Providence, RI; International Press, Somerville, MA, 2009.

\bibitem{FOOO08a}
K. Fukaya, Y.-G. Oh, H. Ohta and K. Ono, \textit{Lagrangian Floer theory on compact toric manifolds I}, Duke Math. J. 151 (2009), no. 1, 23--174.

\bibitem{FOOO08b}
K. Fukaya, Y.-G. Oh, H. Ohta and K. Ono, \textit{Lagrangian Floer theory on compact toric manifolds II: bulk deformations}, Selecta Math. (N.S.) 17 (2011), no. 3, 609--711.

\bibitem{FOOO10a}
K. Fukaya, Y.-G. Oh, H. Ohta and K. Ono, \textit{Toric degeneration and non-displaceable Lagrangian tori in $S^2 \times S^2$}, Int. Math. Res. Not., to appear, \verb"arXiv:1002.1660".

\bibitem{FOOO10b}
K. Fukaya, Y.-G. Oh, H. Ohta and K. Ono, \textit{Lagrangian Floer theory and mirror symmetry on compact toric manifolds}, \verb"arXiv:1009.1648".

\bibitem{FO99}
K. Fukaya and K. Ono, \textit{Arnold conjecture and Gromov-Witten invariant}, Topology 38 (1999), no. 5, 933--1048.

\bibitem{LLW10a}
S.-C. Lau, N.C. Leung and B. Wu, \textit{A relation for Gromov-Witten invariants of local Calabi-Yau threefolds}, Math. Res. Lett. 18 (2011), no. 5, 943--956.

\bibitem{LLW10b}
S.-C. Lau, N.C. Leung and B. Wu, \textit{Mirror maps equal SYZ maps for toric Calabi-Yau surfaces}, Bull. London Math. Soc., to appear, \verb"arXiv:1008.4753".

\bibitem{SYZ96}
A. Strominger, S.-T. Yau and E. Zaslow, \textit{Mirror symmetry is T-duality}. Nuclear Phys. B, {\bf 479} (1996), no. 1-2, 243--259.

\end{thebibliography}
\end{document}